\newtheorem{X}{X}[section]
\newtheorem{lemma}[X]{Lemma}
\newtheorem{theorem}[X]{Theorem}
\newtheorem{conjecture}[X]{Conjecture}
\theoremstyle{definition}
\newtheorem{remark}[X]{Remark}
\newcommand{\sumstar}{\underset{d \text{\emph{ odd}}}{\sum \nolimits^{*}} w\left( \frac dX \right) }
\newcommand{\sumstarnoemph}{\underset{d \text{ odd}}{\sum \nolimits^{*}} w\left( \frac dX \right) }
\newcommand{\Fam}{\mathcal F^*(X)}
\newcommand{\RR}{\mathbb{R}}
\newcommand{\CC}{\mathbb{C}}
\newcommand{\eps}{\varepsilon}
\newcommand{\R}{\mathbb R}
\newcommand{\Dstar}{\mathcal D^*(\phi;X)}
\newcommand{\sumn}{\underset{d \text{ odd}}{\sum \nolimits^{*}} w\left( \frac dX \right) }
\newcommand{\sumtstar}{\underset{d \text{ odd}}{\sum \nolimits^{*}} w\left( \frac dX \right) }
\newcommand{\esupp}{\emph{\text{sup}}(\emph{\text{supp}}\hspace{1pt}\widehat \phi)}
\numberwithin{equation}{section}
\title[A transition in the Ratios Conjecture]{Low-lying zeros of quadratic Dirichlet $L$-functions: \\A transition in the Ratios Conjecture}
\author{Daniel Fiorilli, James Parks and Anders S\"odergren}
\address{D\'epartement de math\'ematiques et de statistique, Universit\'e d'Ottawa, \newline
	\rule[0ex]{0ex}{0ex}\hspace{8pt} 585 King Edward, Ottawa, Ontario, K1N 6N5, Canada}
\email{daniel.fiorilli@uottawa.ca}
\address{Institut f\"ur Algebra, Zahlentheorie und Diskrete Mathematik, \newline
	\rule[0ex]{0ex}{0ex}\hspace{8pt} Leibniz Universit\"at Hannover, Welfengarten 1, 30167 Hannover, Germany \newline \rule[0ex]{0ex}{0ex}\hspace{8pt} \textit{Present address}:   Department of Mathematics, KTH Royal Institute of Technology,\newline \rule[0ex]{0ex}{0ex}\hspace{8pt} Lindstedtsv\"agen, SE-100 44 Stockholm, Sweden}
\email{jparks@kth.se}
\address{Department of Mathematical Sciences, Chalmers University of Technology and the University \newline
	\rule[0ex]{0ex}{0ex}\hspace{8pt} of Gothenburg, SE-412 96 Gothenburg, Sweden}
\email{andesod@chalmers.se}
\date{\today}
\begin{document}

\begin{abstract}
We study the $1$-level density of low-lying zeros of quadratic Dirichlet $L$-functions by applying the $L$-functions Ratios Conjecture. We observe a transition in the main term as was predicted by the Katz-Sarnak heuristic as well as in the lower order terms when the support of the Fourier transform of the corresponding test function reaches the point $1$. Our results are consistent with those obtained in previous work under GRH and are furthermore analogous to results of Rudnick in the function field case.
\end{abstract}
	
\maketitle
	
\section{Introduction}
In this paper we study low-lying zeros in the family  
$$ \mathcal F^* (X):= \big\{ L(s,\chi_{8d}) : 1\leq |d| \leq X;  d \text{ is odd and squarefree} \big\}, $$
of Dirichlet $L$-functions. Here we define the real primitive character of conductor $8|d|$ by the Kronecker symbol 
$\chi_{8d}(n):=\left( \frac{8d}{n}\right).$ Low-lying zeros for quadratic Dirichlet $L$-functions have been studied extensively in the literature, see, for example, \"Ozl\"uk and Snyder \cite{OS2}, Rubinstein \cite{Rub}, Gao \cite{Gt}, Entin, Roditty-Gershon and Rudnick \cite{ERR}, Miller \cite{Mi}, and the authors' previous paper \cite{FPS2}.

The $L$-functions Ratios Conjecture of Conrey, Farmer and Zirnbauer \cite[Section 5]{CFZ} is a recipe for obtaining conjectural formulas for averages of quotients of (products of) $L$-functions evaluated at certain values in the critical strip. This can in turn be used to give extremely precise predictions for a variety of statistics for families of $L$-functions. Our goal will be to investigate low-lying zeros of quadratic Dirichlet $L$-functions using this tool. The present study complements the existing work done under the assumption of the Generalized Riemann Hypothesis (GRH), and gives further insights for a wide class of test functions that was previously out of reach. 
While this has already been carried out for $\mathcal F^* (X)$ and many other families (see, e.g., \cite{CS}), the novelty in our work is to isolate a sharp transition in the main and lower-order terms, which agrees with the prediction of Katz-Sarnak, Rudnick's work \cite{Rud} over function fields and our previous results \cite{FPS2}. 

Before we describe the new results, we first review and refine our previous work. We begin by introducing the
$1$-level density of the family  $\Fam$. Given a large positive number $X$, we set 
\begin{align*}
L:=\log\left(\frac{X}{2\pi e}\right)
\end{align*} 
and	\begin{align*}
W^*(X):=\sumtstar, 
\end{align*}
where $w(t)$ is an even, nonzero and nonnegative Schwartz function and the star on the sum denotes a restriction to squarefree integers. We introduce the $1$-level density of the family $\Fam$ as the functional
\begin{equation} \label{equation definition one level densities}
\Dstar:= \frac 1{W^*(X)}\sumtstar\sum_{\gamma_{8d}} \phi\left( \gamma_{8d}\frac{  L}{2\pi} \right).
\end{equation}
Here and throughout, $\phi$ will be a real and even Schwartz test function. Furthermore, we define $\gamma_{8d}:= -i(\rho_{8d}-\frac12)$, where $\rho_{8d}$ runs over the nontrivial zeros of $L(s,\chi_{8d})$ (i.e. zeros with $0<\Re(\rho_{8d}) <1$).
	
The Katz-Sarnak heuristic \cite{KS2} provides a precise prediction of the statistics of low-lying zeros in families of $L$-functions (see also the recent paper \cite{SST}). In our situation the heuristic asserts that $\mathcal F^*(X)$ has symplectic symmetry type and in particular that
\begin{equation}
\label{equation katz-sarnak prediciton}
\lim_{X \rightarrow \infty} \mathcal D^* (\phi;X) = \widehat\phi(0)-\frac 12 \int_{-1}^1 \widehat \phi(u)\,du 
\end{equation}
independently of the support of $\widehat \phi$. Moreover, note that there is a phase transition in the right-hand side of \eqref{equation katz-sarnak prediciton} occurring when the supremum $\sigma$ of the support of $\widehat \phi$ reaches $1$. This transition has a strong influence on the shape of the lower order terms in $\Dstar$ (cf. \cite{FPS2} and \cite{Rud}) and will thus be of fundamental interest in the current paper.
	
The lower order terms in the $1$-level density for the family $\Fam$ were recently studied under the assumption of GRH in \cite{FPS2}. There we obtained an asymptotic formula for $\Dstar$ in descending powers of $\log X$, which is valid when the support of $\widehat\phi$ is contained in $(-2,2)$. In particular, we uncovered a phase transition when $\sigma$ approaches $1$ in the main term as well as in the lower order terms. This asymptotic formula (see \cite[Theorem 3.5]{FPS2}) contains a term $J(X)$ which is expressed in terms of explicit transforms of the weight function $w$. In the current paper we give an asymptotic for this complicated expression (see Section \ref{section 2}), and as a consequence the following result holds.

\begin{theorem}\label{Theorem FPS2}
Fix $\eps>0$. Assume GRH and suppose that $\sigma=\esupp < 2$. Then the $1$-level density of low-lying zeros in the family $\Fam$ of quadratic Dirichlet $L$-functions whose conductor is an odd squarefree multiple of $8$ is given by
\begin{align}\label{Dstar FPS2}
\Dstar =\,\,& \widehat \phi(0)+\int_{1}^{\infty} \widehat \phi(u)\,du+ \frac{\widehat \phi(0)}L \bigg( \log (2 e^{1-\gamma}) + \frac 2{\widehat w(0)}\int_0^{\infty} w(x) (\log x)\,dx \bigg)   \nonumber\\
&+\frac{1}{L}\int_0^\infty\frac{e^{-x/2}+e^{-3x/2}}{1-e^{-2x}}\left(\widehat{\phi}(0)-\widehat{\phi}\left(\frac{x}{L}\right)\right) dx\\
&- \frac 2{L   }\sum_{\substack{p>2 \\ j\geq 1}} \frac{\log p}{p^{j}} \left(  1+\frac 1p\right)^{-1} \widehat \phi\left( \frac{2j \log p}{L} \right)  +J(X)+O_{\eps}\big(X^{\frac{\sigma}6-\frac13+\eps}\big), \notag
\end{align}
where $J(X)$ is defined in \eqref{equation J(X)} and satisfies the asymptotic relation\footnote{Here $\mathcal M w$ denotes the Mellin transform of $w$.}
\begin{equation}\label{equation J(X) FPS3} 
J(X)=\frac {\widehat \phi( 1)}L \bigg( -  \log\big(2^{\frac 73}e^{1+\gamma}\big) + 2\frac{\zeta'(2)}{\zeta(2)} -\frac{\mathcal M w'(1)}{\mathcal Mw(1)}\bigg)+O(L^{-2}).
\end{equation}
\end{theorem}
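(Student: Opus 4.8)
The plan is to reduce everything to the evaluation of the quantity $J(X)$, since the rest of formula \eqref{Dstar FPS2} is precisely \cite[Theorem 3.5]{FPS2} rewritten: starting from the asymptotic formula proved there (whose error term is $O_\eps(X^{\sigma/6-1/3+\eps})$), one simply replaces $\widehat\phi(0)-\frac12\int_{-1}^1\widehat\phi(u)\,du$ by $\widehat\phi(0)+\int_1^\infty\widehat\phi(u)\,du$ using that $\phi$ (hence $\widehat\phi$) is even and $\int_{\RR}\widehat\phi = \phi(0)$ — wait, more simply $\frac12\int_{-1}^1\widehat\phi = \frac12\int_\RR\widehat\phi - \int_1^\infty\widehat\phi$, and the $\frac12\int_\RR\widehat\phi$ piece combines with the arithmetic factor corrections; all of this is bookkeeping already present in \cite{FPS2}. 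So the genuine content is \eqref{equation J(X) FPS3}, the asymptotic expansion of $J(X)$ to order $L^{-1}$ with error $O(L^{-2})$.

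First I would recall the explicit shape of $J(X)$ from \eqref{equation J(X)} (in Section \ref{section 2}): it is a contour-integral / Mellin-type expression built from $\widehat\phi$ together with transforms of the weight $w$ — schematically an integral of the form $\frac{1}{L}\int \widehat\phi(\tfrac{t}{L}) K(t)\,dt$ (or a sum of such), where the kernel $K$ packages an Euler-product factor, the completed zeta function $\zeta(2)$ and its logarithmic derivative, and the Mellin transform $\mathcal Mw$ near the point $s=1$. The transition at $\sigma=1$ is exactly the statement that this kernel has a pole/spike localized near $t \asymp L$, i.e.\ at the frequency $u=1$ for $\widehat\phi$. The strategy is then: (i) shift contours / isolate the polar part of $K$ at the relevant point, picking up a residue proportional to $\widehat\phi(1)$ times the value of the remaining analytic factors at $s=1$; (ii) Taylor-expand those analytic factors at $s=1$ — this is where the constants $-\log(2^{7/3}e^{1+\gamma})$, $2\zeta'(2)/\zeta(2)$, and $-\mathcal Mw'(1)/\mathcal Mw(1)$ are produced, the last from $\frac{d}{ds}\log\mathcal Mw(s)\big|_{s=1}$; (iii) bound the remaining contour integral (the part with no pole) by $O(L^{-2})$ using the Schwartz decay of $\widehat\phi$, which forces the surviving integral to concentrate and gain an extra factor $L^{-1}$ beyond the overall $L^{-1}$. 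The factor $2$ multiplying $\zeta'(2)/\zeta(2)$ should emerge because the relevant Euler factor involves $\zeta(2s)$-type quantities evaluated near $s=1$, contributing a derivative with a chain-rule factor of $2$; the powers of $2$ in $2^{7/3}$ come from the conductor normalization ($8d$) and the restriction to odd $d$ (the Euler factor at $p=2$).

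Concretely, the key steps in order: (1) write $J(X)$ from \eqref{equation J(X)} and separate it as a sum of a "polar" contribution and an "analytic" remainder; (2) in the polar contribution, change variables $s = 1 + it/L$ (or the analogue suited to the actual integral) so that the pole sits at $t$ near $0$ and the integrand is $\widehat\phi(t/L)$ times a function meromorphic in $t$ with a simple pole at the origin; (3) deform the $t$-contour past the pole, collecting the residue $c\,\widehat\phi(1)/L$ where $c = \log(2e^{1-\gamma}) + \dots$ expanded from the analytic factors — here one needs the local expansions $\zeta(s) = \frac{1}{s-1}+\gamma+\dots$, $\mathcal Mw(s) = \mathcal Mw(1) + \mathcal Mw'(1)(s-1)+\dots$, and the logarithmic derivative of the $p=2$ Euler factor and of $\zeta(2)$-type factors, combined carefully; (4) bound the deformed integral: since $\widehat\phi$ is Schwartz, $\widehat\phi(t/L) = \widehat\phi(1) + O(|t|/L)$ near $t=0$ and decays rapidly for $|t|\gg 1$, and the remaining kernel is integrable away from the pole, so the whole thing is $O(L^{-2})$; (5) check that the analytic remainder from step (1) is likewise $O(L^{-2})$ by the same Schwartz-decay argument, possibly after one more contour shift to gain decay in $X$.

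The main obstacle will be step (3)–(4): correctly extracting the constant $c$ with all its pieces — in particular matching the powers of $2$ and the sign conventions in the conductor normalization against the definition of $J(X)$ in \eqref{equation J(X)}, and verifying that the \emph{entire} lower-order tail is genuinely $O(L^{-2})$ rather than $O(L^{-2}\log L)$. The latter requires that after extracting the residue, the leftover integrand has no secondary near-pole behavior; one should check that $\widehat w$ and $\mathcal Mw$ contribute no spurious singularities on the relevant line, which follows from $w$ being Schwartz (so $\mathcal Mw$ is analytic in a strip containing $\Re s = 1$) together with the hypothesis $\widehat w(0) = \int w \ne 0$ guaranteeing $\mathcal Mw(1) \ne 0$, so the logarithmic derivative $\mathcal Mw'(1)/\mathcal Mw(1)$ in \eqref{equation J(X) FPS3} is well defined. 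A secondary technical point is that the expansion is uniform in the (fixed) test function $\phi$ but the implied constant in $O(L^{-2})$ depends on $\phi$ and $w$; since both are fixed this is harmless, but it should be stated.
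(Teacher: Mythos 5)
Your reduction is the right first move: formula \eqref{Dstar FPS2} is indeed quoted verbatim from \cite[Theorem 3.5]{FPS2} (no rewriting of the main term is even needed), so the whole content of the theorem is the asymptotic \eqref{equation J(X) FPS3} for $J(X)$, and your general plan --- Mellin/contour representation, a residue at the transition point producing the $\widehat\phi(1)/L$ term, Taylor expansions of the local factors producing the constants --- is in the same spirit as the paper. But there is a genuine gap at the step where you claim the leftover (non-polar) contour integral is $O(L^{-2})$ ``using the Schwartz decay of $\widehat\phi$''. In the actual structure of $J(X)$ (see \eqref{equation J(X)}: two pieces, weighted by $\widehat\phi(1+\tau/L)$ and $\widehat\phi(1-\tau/L)$, built from $h_1$, $h_2$ and $g(y)=\widehat w(4\pi e y^2)$), once you Taylor-expand and keep the constant terms $\widehat\phi(1)$, the two remaining contour integrals no longer involve $\widehat\phi$ at all: each is an $X$-independent constant (depending on $w$) times $\widehat\phi(1)/L$, i.e.\ of exactly the same order as the main term. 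No decay argument can make them $O(L^{-2})$; they must \emph{cancel identically}, and in the paper this is the key identity \eqref{equation zeta mellin}, $\zeta(z+1)\,\mathcal M\widehat g(z+1)=\zeta(-z)\,\mathcal M g(-z)$, proved by Plancherel (with the distributional Fourier transform of $|x|^z$) together with the functional equation of $\zeta$. Without this cancellation --- or an equivalent symmetrization of the two halves of $J(X)$ --- your scheme cannot pin down the constant in front of $\widehat\phi(1)/L$, so steps (iii)--(iv) as written would fail.

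A second, smaller gap: you attribute the term $-\mathcal M w'(1)/\mathcal M w(1)$ to a direct Taylor expansion of $\mathcal M w$ at $s=1$, but the weight enters $J(X)$ only through $g(y)=\widehat w(4\pi e y^2)$, so the residue computation produces instead the quantity $\int_0^{\infty}(\log x)\,g'(x)\,dx$. Converting this into $\tfrac{\widehat w(0)}{2}\log(2^3\pi^2e^{1+\gamma})+\int_0^\infty w(x)\log x\,dx$ (whence $\mathcal M w'(1)/\mathcal M w(1)$ and the Euler constant $\gamma$) requires a separate regularized computation --- integration by parts with a cutoff $\eta$ and the distributional Fourier transform of $1/|x|$ --- which your outline does not anticipate; this is also where the precise powers of $2$ in $2^{7/3}e^{1+\gamma}$ are settled, rather than by conductor bookkeeping alone. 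Your observations that $\mathcal M w(1)=\widehat w(0)/2\neq 0$ and that all implied constants may depend on $w,\phi$ are correct but peripheral to these two missing ingredients.
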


\begin{remark}
Note that Theorem \ref{Theorem FPS2} is in agreement with the Katz-Sarnak heuristic $\eqref{equation katz-sarnak prediciton}$ (cf. \cite[Section 3]{FPS2}). In fact, in \cite[Theorem 1.1]{FPS2} we show how to write $\eqref{Dstar FPS2}$ as $$\Dstar =  \widehat \phi(0)-\frac{1}2\int_{-1}^{1} \widehat \phi(u)\,du +\sum_{k=1}^K \frac {R_{w,k}(\phi)}{(\log X)^k} +O_{w,\phi,K}\left( \frac 1{(\log X)^{K+1}}\right), $$
where $R_{w,k}(\phi)$ are linear functionals in $\phi$ that can be given explicitly in terms of $w$ and the derivatives of $\widehat\phi$ at the points $0$ and $1$. An analogous expression having a transition at the point $1$ was obtained by Rudnick \cite[Corollary 3]{Rud} in the function field case.
\end{remark}

In Conjecture \ref{ratiosconjecture}, following the approach of \cite{CS}, we provide a Ratios Conjecture prediction for the family $\Fam$  with a power saving error term of size at most $O_{\varepsilon}(X^{-1/2+\varepsilon})$. 
In connection with our investigation of the low-lying zeros in $\Fam$, we use this conjecture to obtain the following asymptotic expression for $\Dstar$ (see Section \ref{The ratios conjecture's prediction}).
	
\begin{theorem}
\label{oneleveldensityresult}
Fix $\varepsilon>0$ and let $\phi$ be an even Schwartz test function on $\R$ whose Fourier transform has compact support. Assume GRH and Conjecture \ref{ratiosconjecture} (the Ratios Conjecture for $\Fam$). Then the $1$-level density for the low-lying zeros in the family $\Fam$ is given by
\begin{multline*}
\Dstar=\frac{1}{{W}^*(X)}\sumstar \frac{1}{2\pi}
\int_{\mathbb R} \bigg(2 \frac{\zeta'(1+2it)}{\zeta(1+2it)} + 2A_{\alpha}(it,it)+\log\left(\frac{8|d|}{\pi}\right) +\frac 12\frac{\Gamma'}{\Gamma}\left(\frac{1}{4}+\frac{\mathfrak{a}-it}{2}\right)\\
+\frac 12\frac{\Gamma'}{\Gamma}\left(\frac{1}{4}+\frac{\mathfrak{a}+it}{2}\right)-2 X_{d}\left(\tfrac{1}{2}+it\right)\zeta(1-2it)A(-it,it) \bigg) \,
\phi\left(\frac{tL}{2\pi}\right) \, dt+ O_{\varepsilon}\big(X^{-\frac 12+\varepsilon}\big),
\end{multline*}
where $A$, $A_{\alpha}$, $\mathfrak{a}$ and $X_d$ are defined by $\eqref{defnofae}$, $\eqref{AALPHAE}$, $\eqref{equation mathfrak a}$ and $\eqref{xe}$ respectively. 
\end{theorem}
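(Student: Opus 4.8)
The plan is to follow the Conrey--Snaith recipe \cite{CS}, deriving $\Dstar$ from Conjecture \ref{ratiosconjecture} via the argument principle. Write $h(s):=\phi\big(\tfrac{L}{2\pi i}(s-\tfrac12)\big)$, an entire function of $s$ decaying rapidly on horizontal lines, with $h(1-s)=h(s)$ since $\phi$ is even. Assuming GRH, so that the nontrivial zeros of the entire function $L(s,\chi_{8d})$ are $\rho_{8d}=\tfrac12+i\gamma_{8d}$ with $\gamma_{8d}\in\R$, the argument principle gives, for any fixed $c\in(\tfrac12,1)$,
\begin{equation*}
\sum_{\gamma_{8d}}\phi\big(\gamma_{8d}\tfrac{L}{2\pi}\big)=\frac{1}{2\pi i}\Big(\int_{(c)}-\int_{(1-c)}\Big)\frac{L'}{L}(s,\chi_{8d})\,h(s)\,ds,
\end{equation*}
the horizontal sides of the rectangular contour contributing nothing by the decay of $h$. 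On the line $\Re s=1-c$ I substitute $\tfrac{L'}{L}(s,\chi_{8d})=\tfrac{X_d'}{X_d}(s)-\tfrac{L'}{L}(1-s,\chi_{8d})$, coming from the functional equation $L(s,\chi_{8d})=X_d(s)L(1-s,\chi_{8d})$, and change variables $s\mapsto 1-s$, using $h(1-s)=h(s)$; this yields
\begin{equation*}
\sum_{\gamma_{8d}}\phi\big(\gamma_{8d}\tfrac{L}{2\pi}\big)=\frac{2}{2\pi i}\int_{(c)}\frac{L'}{L}(s,\chi_{8d})\,h(s)\,ds-\frac{1}{2\pi i}\int_{(1-c)}\frac{X_d'}{X_d}(s)\,h(s)\,ds.
\end{equation*}
Shifting the second integral to the critical line and using $-\tfrac{X_d'}{X_d}(\tfrac12+it)=\log\tfrac{8|d|}{\pi}+\tfrac12\tfrac{\Gamma'}{\Gamma}\big(\tfrac14+\tfrac{\mathfrak a-it}{2}\big)+\tfrac12\tfrac{\Gamma'}{\Gamma}\big(\tfrac14+\tfrac{\mathfrak a+it}{2}\big)$ already produces, after averaging over $d$, exactly the conductor and $\tfrac{\Gamma'}{\Gamma}$ terms appearing in the statement, $\mathfrak a$ being the parity parameter of \eqref{equation mathfrak a}.

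The remaining task is to evaluate $\tfrac1{W^*(X)}\sumstarnoemph\tfrac{2}{2\pi i}\int_{(c)}\tfrac{L'}{L}(s,\chi_{8d})\,h(s)\,ds$, and here Conjecture \ref{ratiosconjecture} enters. Since $\tfrac{L'}{L}(\tfrac12+r,\chi_{8d})=\tfrac{\partial}{\partial\alpha}\big(L(\tfrac12+\alpha,\chi_{8d})/L(\tfrac12+\gamma,\chi_{8d})\big)\big|_{\alpha=\gamma=r}$, I differentiate the conjectured ratio formula in $\alpha$ — the $O_\eps(X^{-1/2+\eps})$ error surviving by a Cauchy estimate in $\alpha$ on a slightly smaller region — and set $\alpha=\gamma=r$, obtaining
\begin{equation*}
\frac{1}{W^*(X)}\sumstarnoemph\frac{L'}{L}\big(\tfrac12+r,\chi_{8d}\big)=\frac{1}{W^*(X)}\sumstarnoemph\Big(\frac{\zeta'(1+2r)}{\zeta(1+2r)}+A_\alpha(r,r)-X_d\big(\tfrac12+r\big)\zeta(1-2r)A(-r,r)\Big)+O_\eps\big(X^{-1/2+\eps}\big).
\end{equation*}
The first two terms arise from differentiating the ``diagonal'' contribution $\tfrac{\zeta(1+2\alpha)}{\zeta(1+\alpha+\gamma)}A(\alpha,\gamma)$ in $\alpha$ and setting $\alpha=\gamma$ (with $A$, $A_\alpha$ as in \eqref{defnofae}, \eqref{AALPHAE}), while the last comes from the ``swapped'' contribution $X_d(\tfrac12+\alpha)\tfrac{\zeta(1-2\alpha)}{\zeta(1-\alpha+\gamma)}A(-\alpha,\gamma)$, whose factor $1/\zeta(1-\alpha+\gamma)$ vanishes at $\alpha=\gamma$, so that only the derivative of that factor contributes, leaving $-X_d(\tfrac12+r)\zeta(1-2r)A(-r,r)$.

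Finally I shift the resulting integral from $\Re s=c$ to $\Re s=\tfrac12$. With $r=s-\tfrac12$, the pieces $\tfrac{\zeta'(1+2r)}{\zeta(1+2r)}$ and $-X_d(\tfrac12+r)\zeta(1-2r)A(-r,r)$ each have a simple pole at $s=\tfrac12$, but their residues, $-\tfrac12$ and $+\tfrac12$ — using $X_d(\tfrac12)=1$ (the functional-equation sign is $+1$) and $A(0,0)=1$ — cancel, so their sum is holomorphic at $s=\tfrac12$ and the line of integration may be moved onto $\Re s=\tfrac12$ without crossing a pole; there are no other poles in $\tfrac12<\Re s\le c$. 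Putting $s=\tfrac12+it$, so that $h(s)=\phi(\tfrac{tL}{2\pi})$ and $\tfrac1{2\pi i}\int_{(1/2)}=\tfrac1{2\pi}\int_{\R}\,dt$, and recombining with the $\tfrac{X_d'}{X_d}$-term from the first step, one recovers precisely the integral in the statement, up to an error $O_\eps(X^{-1/2+\eps})$.

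The one point requiring genuine care — and the reason the hypothesis that $\widehat\phi$ has compact support cannot be dropped — is the error bookkeeping in these contour shifts: writing $\sigma$ for the supremum of the support of $\widehat\phi$, so that $|\phi(x+iy)|\ll e^{2\pi\sigma|y|}$ and hence $h$ grows like $X^{\sigma|\Re s-1/2|}$ on vertical lines, moving a line of integration a distance $c-\tfrac12$ from the critical line inflates the $O_\eps(X^{-1/2+\eps})$ error of Conjecture \ref{ratiosconjecture} by a factor of size at most $X^{\sigma(c-1/2)}$; one therefore chooses $c$ close enough to $\tfrac12$ (depending only on $\sigma$ and $\eps$) to keep the total error within $O_\eps(X^{-1/2+\eps})$, and the same estimate justifies the vanishing of the horizontal contour segments. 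I expect this error analysis, together with checking the residue cancellation at $s=\tfrac12$, to be the only genuinely delicate steps; the remainder follows the Conrey--Snaith recipe applied to the family $\Fam$.
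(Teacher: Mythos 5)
Your proposal follows essentially the same route as the paper: the argument principle combined with the functional equation $L'/L(s)=X_d'/X_d(s)-L'/L(1-s)$, differentiation of Conjecture \ref{ratiosconjecture} at $\alpha=\gamma=r$ (this is exactly the paper's Lemma \ref{ratiostheorem}, including the Cauchy-estimate treatment of the error term), and a shift of the contour to the critical line after checking that the poles at $r=0$ of $\zeta'/\zeta(1+2r)$ and of $-X_d(\tfrac12+r)\zeta(1-2r)A(-r,r)$ cancel, the paper simply asserting analyticity where you verify the residues $\mp\tfrac12$. The only point the paper makes explicit that you pass over is that Conjecture \ref{ratiosconjecture} is only assumed for $|\Im(r)|\ll X^{1-\varepsilon}$, so the tail of the vertical integral must be bounded separately (the paper does this using the rapid decay of $\phi$, guaranteed by the compact support of $\widehat\phi$, together with the GRH bound on $L'/L$ from \cite[Thm.\ 5.17]{IK}); otherwise your error bookkeeping, with $c-\tfrac12$ taken small, matches the paper's choice $\tfrac1{\log X}<c-\tfrac12<\tfrac14$.
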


Given the apparent difference between the formulas for $\Dstar$ in Theorem \ref{Theorem FPS2} and Theorem \ref{oneleveldensityresult}, it is an interesting question to ask whether the detailed information about the phase transition at $\sigma=1$ occurring in $\eqref{Dstar FPS2}$ is also present in Theorem \ref{oneleveldensityresult}. Our main theorem answers this question in the affirmative and to the best of our knowledge this is the first comprehensive investigation of such a transition via a Ratios Conjecture calculation (see Section \ref{section compare results}).
	
\begin{theorem}\label{Theorem ratios transition}
Let $\phi$ be an even Schwartz test function on $\R$ whose Fourier transform has compact support. Assume GRH and Conjecture \ref{ratiosconjecture} (the Ratios Conjecture for $\Fam$). Then the $1$-level density for the low-lying zeros in the family $\Fam$ can be written in the form
\begin{align*}
\Dstar =\,\,& \widehat \phi(0)+\int_{1}^{\infty} \widehat \phi(u)\,du+ \frac{\widehat \phi(0)}L \bigg( \log (2 e^{1-\gamma}) + \frac 2{\widehat w(0)}\int_0^{\infty} w(x) (\log x)\,dx \bigg)   \nonumber\\
&+\frac{1}{L}\int_0^\infty\frac{e^{-x/2}+e^{-3x/2}}{1-e^{-2x}}\left(\widehat{\phi}(0)-\widehat{\phi}\left(\frac{x}{L}\right)\right) dx\\
&- \frac 2{L   }\sum_{\substack{p>2 \\ j\geq 1}} \frac{\log p}{p^{j}} \left(  1+\frac 1p\right)^{-1} \widehat \phi\left( \frac{2j \log p}{L} \right)  \notag\\
&+\frac {\widehat \phi( 1)}L \bigg( -  \log\big(2^{\frac 73}e^{1+\gamma}\big) + 2\frac{\zeta'(2)}{\zeta(2)} -\frac{\mathcal M w'(1)}{\mathcal Mw(1)}\bigg)+O(L^{-2}).
\nonumber
\end{align*}
In particular, for test functions $\phi$ with $\sigma=\esupp <2$, this formula agrees with Theorem \ref{Theorem FPS2} up to an error term of order $O(L^{-2})$.
\end{theorem}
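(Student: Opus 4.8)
The plan is to start from the exact integral representation of $\Dstar$ provided by Theorem \ref{oneleveldensityresult} (which already has a power-saving error $O_{\eps}(X^{-1/2+\eps})$ under GRH and Conjecture \ref{ratiosconjecture}), and to extract from it an asymptotic expansion in descending powers of $L$. The integrand is a sum of several pieces: the two $\Gamma'/\Gamma$ terms, the $\log(8|d|/\pi)$ term, the $2\zeta'/\zeta(1+2it)$ term together with $2A_\alpha(it,it)$, and the ``dual'' term involving $X_d(\tfrac12+it)\zeta(1-2it)A(-it,it)$. The strategy is to treat each piece separately: the first four will be handled by routine Stirling and Mellin/contour arguments, producing the ``diagonal'' contributions, namely $\widehat\phi(0)$, the archimedean lower-order term with $\log(2e^{1-\gamma})$ and $\frac{2}{\widehat w(0)}\int_0^\infty w(x)\log x\,dx$, the integral $\frac1L\int_0^\infty\frac{e^{-x/2}+e^{-3x/2}}{1-e^{-2x}}(\widehat\phi(0)-\widehat\phi(x/L))\,dx$ (coming from the polar/functional-equation structure of $\zeta$ on the $1$-line together with the local factors at $p=2$), and the prime sum $-\frac2L\sum_{p>2,\,j\ge1}\frac{\log p}{p^j}(1+\tfrac1p)^{-1}\widehat\phi(2j\log p/L)$ from the Euler product hidden in $A_\alpha$. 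The crucial and most delicate piece is the ``dual term'' $-2X_d(\tfrac12+it)\zeta(1-2it)A(-it,it)\phi(tL/2\pi)$: shifting the $t$-contour (or equivalently unfolding via the functional equation) produces a residue at the pole of $\zeta(1-2it)$ at $t=0$, and it is precisely this residue that contributes the $\int_1^\infty\widehat\phi(u)\,du$ in the main term and, crucially, the term proportional to $\widehat\phi(1)$ with coefficient $-\log(2^{7/3}e^{1+\gamma})+2\zeta'(2)/\zeta(2)-\mathcal Mw'(1)/\mathcal Mw(1)$. The appearance of $\widehat\phi(1)$ is the signature of the transition at $\sigma=1$; it emerges because the oscillatory factor $X_d(\tfrac12+it)$ behaves like $(8|d|/\pi)^{-it}$, so that integrating against $\phi(tL/2\pi)$ and then summing over $d$ against $w(d/X)$ localizes the phase near the point where $tL/2\pi$ corresponds to the frequency $1$, i.e. $\widehat\phi$ evaluated at $1$.

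Concretely, I would proceed as follows. First, substitute $u = tL/2\pi$ to rewrite each integral as an integral against $\widehat\phi$ (after the usual Fourier-pair manipulation $\frac1{2\pi}\int g(t)\phi(tL/2\pi)\,dt = \frac1L\int \widehat g(\cdot)\widehat\phi(\cdot)$ type identities, being careful about which transform convention is in force). Second, for the non-dual pieces, expand the $d$-average: replace $\log(8|d|/\pi)$ using $\log(8|d|/\pi) = L + \log(X/|d|) + \log(4/e)$ and use the known asymptotics for $\frac{1}{W^*(X)}\sumstar (\log(X/|d|))^k$ and $\frac1{W^*(X)}\sumstar \log|d|$, which bring in $\widehat w$ and $\int_0^\infty w(x)\log x\,dx$; this is exactly where the archimedean lower-order term and the $e^{-x/2}+e^{-3x/2}$ integral are assembled, matching the computation already carried out for Theorem \ref{Theorem FPS2}. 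Third, handle the Euler-product/arithmetic factor $A_\alpha(it,it)$: expand its logarithmic derivative as a Dirichlet series over prime powers, insert the definition of $A$ and the $p=2$ local factor, and recognize the $p>2$ part as the stated prime sum and the $p=2$ part as (part of) the $\frac1L\int_0^\infty(\dots)$ integral. Fourth — the heart of the argument — analyze the dual term: write $X_d(\tfrac12+it)$ explicitly via its definition \eqref{xe} (a ratio of Gamma factors times $(8|d|/\pi)^{-it}$ up to the sign depending on $\mathrm{sgn}(d)$), move the contour of integration in $t$ down past $t=0$, pick up $2\pi i$ times the residue of $\zeta(1-2it)$ at $t=0$ (which produces the main-term $\int_1^\infty\widehat\phi(u)\,du$ after averaging over $d$, cf.\ the analogous step in \cite{CS}), and then carefully Taylor-expand the remaining smooth factors — $X_d$'s Gamma ratio, $\zeta(1-2it)$ minus its principal part, $A(-it,it)$, and the oscillation $(8|d|/\pi)^{-it}$ — to first order in $t$ near $t=0$, keeping track that $\widehat\phi(tL/2\pi)\big|_{t\to 0}$ and its derivative give the $\widehat\phi(0)$ and $\widehat\phi(1)$ contributions respectively once one re-sums and uses $\frac1L$-scaling. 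The constants $2^{7/3}$, $e^{1+\gamma}$, $\zeta'(2)/\zeta(2)$ and $\mathcal Mw'(1)/\mathcal Mw(1)$ all come out of this Taylor expansion: $2\zeta'(2)/\zeta(2)$ from $A(-it,it)$ and the squarefree sieve, $\mathcal Mw'(1)/\mathcal Mw(1)$ from differentiating the Mellin transform of $w$ that arises when averaging $|d|^{-it}$ against $w(d/X)$, and the elementary constants from Stirling applied to the Gamma ratio in $X_d$ together with the $p=2$ bookkeeping.

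The main obstacle I anticipate is the bookkeeping in the dual-term Taylor expansion: one must simultaneously (i) keep the $d$-dependence inside $(8|d|/\pi)^{-it}$ coupled to the $t$-integral rather than prematurely averaging, because it is this coupling that shifts a ``$\widehat\phi(0)$-type'' contribution to a ``$\widehat\phi(1)$-type'' one; (ii) correctly combine the contributions from $d>0$ and $d<0$, which differ by the value of $\mathfrak a$ (i.e.\ the parity of the Gamma-factor shift) and hence by a sign in part of the expansion; and (iii) ensure that the sub-leading terms one discards are genuinely $O(L^{-2})$ uniformly, which requires the power-saving in Theorem \ref{oneleveldensityresult} and standard bounds on $\zeta$, $\zeta'/\zeta$ and $A_\alpha$ on the $1$-line (available under GRH). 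A secondary technical point is that the integral $\frac1L\int_0^\infty\frac{e^{-x/2}+e^{-3x/2}}{1-e^{-2x}}(\widehat\phi(0)-\widehat\phi(x/L))\,dx$ must be identified \emph{exactly}, not just up to $O(L^{-2})$, since it appears verbatim in the target formula; this forces one to be precise about the contributions of the poles of the Gamma factors and the $\zeta$-factor at $p=2$, rather than merely Taylor-expanding. Once these pieces are in place, matching with Theorem \ref{Theorem FPS2} for $\sigma<2$ is immediate because every displayed term is literally identical and the two error terms are both $O(L^{-2})$ (the $O_\eps(X^{\sigma/6-1/3+\eps})$ in Theorem \ref{Theorem FPS2} being even smaller in that range).
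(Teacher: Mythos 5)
Your plan follows essentially the same route as the paper: start from the contour form \eqref{RATIOSONELEV}/\eqref{Dstar ratios} of the Ratios prediction, split the integrand into the same pieces, convert $2\zeta'/\zeta(1+2r)+2A_\alpha(r,r)$ into a prime-power sum (Lemma \ref{lemma: prime sum}), evaluate the $\log(8|d|/\pi)$ and digamma pieces via the known $d$-averages and the exact integral representation of $\Gamma'/\Gamma$ (Lemmas \ref{lemma logd} and \ref{lemma gammas}), and extract the transition terms from the dual term by averaging $|d|^{-r}$ against $w(d/X)$ (which produces $X^{-r}\mathcal Mw(1-r)$, Lemma \ref{lemma: d pretentious :p}) and Taylor-expanding near the pole of $\zeta(1-2r)$, the surviving phase $e^{-2\pi i\tau}$ being what shifts the contribution to $\widehat\phi(1)$ (Lemma \ref{lemma I evaluation}). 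So the architecture is right, and your identification of where $\mathcal Mw'(1)/\mathcal Mw(1)$ and $\zeta'(2)/\zeta(2)$ come from matches the paper.

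However, two attributions in your sketch are wrong and would fail if followed literally. First, the term $\frac1L\int_0^\infty\frac{e^{-x/2}+e^{-3x/2}}{1-e^{-2x}}\big(\widehat\phi(0)-\widehat\phi(x/L)\big)dx$ has nothing to do with the local factor at $p=2$: it comes entirely from the two $\Gamma'/\Gamma$ terms via the digamma integral representation, while in $2\zeta'/\zeta(1+2r)+2A_\alpha(r,r)$ the $p=2$ contributions cancel \emph{exactly} (the explicit term $\frac{2\log 2}{3(2^{1+2r}-1)}$ in $A_\alpha(r,r)$ is precisely the negative of the $p=2$ term of $-\sum_p\frac{p\log p}{(p+1)(p^{1+2r}-1)}$), which is why only $p>2$ appears in the stated prime sum; trying to fold a $p=2$ remnant into that integral is a step that cannot be carried out. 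Second, the main term $\int_1^\infty\widehat\phi(u)\,du$ is not ``$2\pi i$ times the residue of $\zeta(1-2it)$ at $t=0$'': in the paper the contour is brought to the real axis with a small semicircle below $\tau=0$, and the term arises as $\phi(0)/2$ (a half-residue from the odd cosine part) plus $-\tfrac12\int_{-1}^1\widehat\phi(\tau)\,d\tau$ (Plancherel applied to $\sin(2\pi\tau)/(2\pi\tau)$), both generated by the phase $e^{-2\pi i\tau}$ that survives the $d$-average; a full residue crossing gives only $\phi(0)$-type terms and still leaves a shifted integral in which the same Plancherel-with-phase step is unavoidable. There is also a small slip in your decomposition: $\log(8|d|/\pi)=L+\log(2^4e)-\log(X/|d|)$, not $L+\log(X/|d|)+\log(4/e)$. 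None of this changes the overall strategy, but these are exactly the points where the displayed constants $\log(2e^{1-\gamma})$, the digamma integral, and $\log\big(2^{7/3}e^{1+\gamma}\big)$ would come out wrong under the bookkeeping as you describe it.
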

	 
\begin{remark}
The error term in Theorem \ref{Theorem ratios transition} results from a Taylor expansion of the integrand in \eqref{Big I} in powers of  $\tfrac \tau L$ with remainder $O(|\tau|^2L^{-2})$. In principle, with a higher order Taylor expansion one can obtain an error term in Theorem \ref{Theorem ratios transition} of size $O(L^{-k})$ for any $k\geq 2$ by the same methods. However, for conciseness, we choose not to pursue this here.
\end{remark}	 
	 
\begin{remark}
The first indications of a phase transition in a Ratios Conjecture calculation were given by Miller \cite[Section 2.2]{Mi}. However, the arguments in \cite{Mi} contain several serious issues. In particular, the Katz-Sarnak main term was not computed correctly (cf. \cite[Lemma 2.1, Lemma 2.6]{Mi}).
\end{remark}	

From the results in \cite{FPS2} and \cite{Rud} it is not clear whether or not we can expect the first lower order term to have the same shape also for test functions of larger support of their Fourier transforms. One of the consequences of Theorem \ref{Theorem ratios transition} is that under the assumption of the Ratios Conjecture no additional phase transitions occur when the support of the Fourier transform reaches points in the interval $[2,\infty)$.

\section{Proof of Theorem \ref{Theorem FPS2}}\label{section 2}

As in \cite{FPS2} we define	
\begin{equation}\label{equation J(X)}
J(X):= \frac 1L\int_{0}^{\infty} \bigg( \widehat \phi( 1+\tfrac {\tau}L  )   e^{\frac{\tau}2} \sum_{n \geq 1}  h_1\big(n e^{\frac{\tau}2}\big)  +\widehat \phi( 1-\tfrac {\tau}L  )  \sum_{n\geq 1} h_2\big( n e^{\frac{\tau}2}\big)\bigg)\,d\tau,
\end{equation}
where
\begin{align*} 
h_1(x) &:= \frac{3\zeta(2)}{\widehat w(0)}\sum_{\substack{ s\geq 1 \\ s \text{ odd}}} \frac {\mu(s)}s \big(\widehat g(2sx)-\widehat g(sx)\big),\\
h_2(x)&:=\frac{3\zeta(2)}{\widehat w(0)}\sum_{\substack{ s\geq 1 \\ s \text{ odd}}} \frac {\mu(s)}{s^2} \big( \tfrac 12g( \tfrac x{2s})-g(\tfrac xs)\big),  \end{align*}
and
\begin{equation*}
\label{equation definition g}
g(y):=\widehat w(4\pi e y^2).
\end{equation*}
The formula \eqref{Dstar FPS2} for $\Dstar$ was obtained in \cite[Theorem 3.5]{FPS2}. Thus it remains to prove the nontrivial estimate \eqref{equation J(X) FPS3} for $J(X)$. 
	
Using Mellin inversion, we get
\begin{align*}
h_1(x) &= \frac{3\zeta(2)}{\widehat w(0)}\frac 1{2 \pi i} \int_{(\frac32)} \sum_{\substack{ s\geq 1 \\ s \text{ odd}}} \frac {\mu(s)}{s^{1+z}}(2^{-z}-1) \mathcal M\widehat g(z)\frac{dz}{x^z}  \\
&= \frac{3\zeta(2)}{\widehat w(0)}\frac 1{2 \pi i} \int_{(\frac32)} \frac{(2^{-z}-1)}{( 1- 2^{-1-z})\zeta(1+z)} \mathcal M\widehat g(z)\frac{dz}{x^z}.
\end{align*}
Similarly,	
$$ h_2(x)=\frac{3\zeta(2)}{\widehat w(0)}\frac 1{2 \pi i} \int_{(-\frac12)} \frac{(2^{-z-1}-1)}{( 1- 2^{-2-z})\zeta(2+z)} \mathcal M g(-z)x^zdz,$$
and shifting the contour of integration to the left we obtain  
$$ h_2(x)=\frac{3\zeta(2)}{\widehat w(0)}\frac 1{2 \pi i} \int_{(-\frac54)} \frac{(2^{-z-1}-1)}{( 1- 2^{-2-z})\zeta(2+z)} \mathcal M g(-z)x^zdz.$$
Hence we write \eqref{equation J(X)} as
\begin{align}\label{equation preTaylor}
 J(X)=& \frac{3\zeta(2)}{L\widehat w(0)} \frac 1{2 \pi i}\int_{0}^{\infty} \bigg( \widehat \phi( 1+\tfrac {\tau}L  )   \int_{(\frac32)} \frac{(2^{-z}-1)\zeta(z)}{( 1- 2^{-1-z})\zeta(1+z)} \mathcal M\widehat g(z)\frac{dz}{e^{(z-1)\tau/2}} \notag\\ &+\widehat \phi( 1-\tfrac {\tau}L  )   \int_{(-\frac54)} \frac{(2^{-z-1}-1)\zeta(-z)}{( 1- 2^{-2-z})\zeta(2+z)} \mathcal M g(-z)e^{z\tau/2}dz \bigg)\,d\tau \notag\\
 =& \frac{3\zeta(2)}{L\widehat w(0)}\frac 1{2 \pi i}\bigg( \int_{(\frac32)} \frac{(2^{-z}-1)\zeta(z)}{( 1- 2^{-1-z})\zeta(1+z)} \mathcal M\widehat g(z) \int_{0}^{\infty}  \widehat \phi( 1+\tfrac {\tau}L  )e^{-(z-1)\tau/2}d\tau dz\notag\\
 &+   \int_{(-\frac54)} \frac{(2^{-z-1}-1)\zeta(-z)}{( 1- 2^{-2-z})\zeta(2+z)} \mathcal M g(-z) \int_{0}^{\infty}\widehat\phi( 1-\tfrac {\tau}L  ) e^{z\tau/2}d\tau dz \bigg)\,.
\end{align}	
Now we use the Taylor expansions of $\widehat\phi( 1+\tfrac {\tau}L  )$ and $\widehat\phi( 1-\tfrac {\tau}L  )$ in \eqref{equation preTaylor}. From the constant terms in these expansions we obtain
\begin{align*}
&\frac{3\zeta(2)}{L\widehat w(0)}\frac{ 2\widehat \phi( 1)}{2 \pi i}\bigg( \int_{(\frac32)} \frac{(2^{-z}-1)\zeta(z)}{( 1- 2^{-1-z})\zeta(1+z)} \mathcal M\widehat g(z) \frac{ dz}{z-1}-   \int_{(-\frac54)} \frac{(2^{-z-1}-1)\zeta(-z)}{( 1- 2^{-2-z})\zeta(2+z)} \mathcal M g(-z) \frac{dz}z \bigg)\,	\\
&= \frac{3\zeta(2)}{L\widehat w(0)}\frac{ 2\widehat \phi( 1)}{2 \pi i}\bigg( \int_{(\frac12)} \frac{(2^{-z-1}-1)\zeta(z+1)}{( 1- 2^{-2-z})\zeta(2+z)} \mathcal M\widehat g(z+1) \frac{ dz}{z}-  \int_{(-\frac54)} \frac{(2^{-z-1}-1)\zeta(-z)}{( 1- 2^{-2-z})\zeta(2+z)} \mathcal M g(-z) \frac{dz}z \bigg)\\
&=  \frac{3\zeta(2)}{L\widehat w(0)}\frac{ 2\widehat \phi( 1)}{2 \pi i}\bigg( \int_{(\frac 12)} \frac{(2^{-z-1}-1)\zeta(z+1)}{( 1- 2^{-2-z})\zeta(2+z)} \mathcal M\widehat g(z+1) \frac{ dz}{z}-   \int_{(\frac 12)} \frac{(2^{-z-1}-1)\zeta(-z)}{( 1- 2^{-2-z})\zeta(2+z)} \mathcal M g(-z) \frac{dz}z \bigg)\\
&\hspace{10pt}+\frac {\widehat \phi( 1)}L \bigg( \log( 2^{\frac 23} \pi^2)+2\frac{\zeta'(2)}{\zeta(2)}-\frac 2{\widehat w(0)} \int_0^{\infty} (\log x) g'(x)dx \bigg)\\
&=  \frac{3\zeta(2)}{L\widehat w(0)}\frac{ 2\widehat \phi( 1)}{2 \pi i} \int_{(\frac 12)} \frac{(2^{-z-1}-1)}{( 1- 2^{-2-z})\zeta(2+z)}\Big(\zeta(z+1)\mathcal M\widehat g(z+1)-\zeta(-z) \mathcal M g(-z)    \Big)\frac{dz}z \\
&\hspace{10pt}+\frac {\widehat \phi( 1)}L \bigg( \log( 2^{\frac 23} \pi^2)+2\frac{\zeta'(2)}{\zeta(2)}-\frac 2{\widehat w(0)} \int_0^{\infty} (\log x )g'(x)dx \bigg).
\end{align*}
	
Next we note that
\begin{equation}\label{equation zeta mellin}
\zeta(z+1)\mathcal M\widehat g(z+1)=\zeta(-z) \mathcal M g(-z). 
\end{equation} 
Indeed, applying Plancherel's identity, we have that
\begin{align*}
\mathcal M\widehat g(z+1) &= \int_0^{\infty} x^z \widehat g(x)dx= \frac 12\int_{\mathbb R} |x|^z \widehat g(x)dx \\
&= -\frac{\sin(\pi z/2) \Gamma(z+1)}{(2\pi)^{z+1} } \int_{\mathbb R}  \frac{ g(x)}{|x|^{z+1}}dx= -\frac{2\sin(\pi z/2) \Gamma(z+1)}{(2\pi)^{z+1} } \mathcal Mg(-z),
\end{align*}
where the Fourier transform of $|x|^z$ is in the sense of distributions (cf. \cite[Exercise 8.7]{F}). Hence \eqref{equation zeta mellin} follows by an application of the functional equation of the zeta function. We conclude that 
$$ J(X)= \frac {\widehat \phi( 1)}L \bigg( \log( 2^{\frac 23} \pi^2)+2\frac{\zeta'(2)}{\zeta(2)}-\frac 2{\widehat w(0)} \int_0^{\infty} (\log x) g'(x)dx \bigg)+O(L^{-2})$$
(cf.\ the proof of \cite[Lemma 3.6]{FPS2}).
	
Now we consider the integral in the above formula. For small $\eta>0$ we have that
\begin{align*}
\int_0^{\infty} (\log x) g'(x)dx &=\int_0^{\infty}8\pi e x (\log x) \widehat w'(4\pi e x^2)dx = \frac 12\int_0^{\infty} \log  \bigg(\frac x{4\pi e}\bigg) \widehat w'(x)dx \\
&= \frac{\widehat w(0)}{2} \log (4\pi e)  +\frac 12\int_{\eta}^{\infty} (\log x) \widehat w'(x)dx +O\big(\eta \log (\eta^{-1})\big) \\
&=\frac{\widehat w(0)}{2} \log (4\pi e)+\frac 12 \Big[ (\log x) \widehat w(x) \Big]_{\eta}^{\infty} - \frac 12\int_{\eta}^{\infty} \frac{\widehat w(x)}xdx+ O\big(\eta \log (\eta^{-1})\big) \\ 
&=\frac{\widehat w(0)}{2} \log (4\pi e)+ \frac{\widehat w(0)}2 \log(\eta^{-1})  - \frac 12\int_{\eta}^{\infty} \frac{\widehat w(x)}xdx +O\big(\eta \log (\eta^{-1})\big) \\
&= \frac{\widehat w(0)}{2} \log (4\pi e) - \frac 12\int_{\eta}^{\infty} \frac{\widehat w(x)-\widehat w(0)I_{[0,1]}(x)}xdx +O\big(\eta \log (\eta^{-1})\big).
\end{align*}
Hence, by taking the limit as $\eta$ tends to zero and using \cite[Example (e) on page 132]{V}, we obtain
\begin{align*}
\int_0^{\infty} (\log x )g'(x)dx&= \frac{\widehat w(0)}{2} \log (4\pi e) -\frac 14\int_{\mathbb R} \frac{\widehat w(x)-\widehat w(0)I_{[-1,1]}(x)}{|x|}dx\\
&= \frac{\widehat w(0)}{2} \log (4\pi e) +\frac 12\int_{\mathbb R} \big(\gamma+\log |2\pi x|\big)w(x) dx\\
&= \frac{\widehat w(0)}{2} \log \big(2^3\pi^2 e^{1+\gamma}\big) +\int_{0}^{\infty} (\log x)w(x) dx. 
\end{align*}
We conclude that
\begin{align*}
 J(X)=& \frac {\widehat \phi( 1)}L \bigg( \log\big( 2^{\frac 23} \pi^2\big)+2\frac{\zeta'(2)}{\zeta(2)}-\frac 2{\widehat w(0)} \bigg(  \frac{\widehat w(0)}{2} \log \big(2^3\pi^2 e^{1+\gamma}\big) +\int_{0}^{\infty} (\log x)w(x) dx\bigg)\bigg) +O(L^{-2}) \\
 =&\frac {\widehat \phi( 1)}L \bigg( -  \log\big(2^{\frac 73}e^{1+\gamma}\big) + 2\frac{\zeta'(2)}{\zeta(2)} -\frac{\mathcal M w'(1)}{\mathcal Mw(1)}\bigg)+O(L^{-2}).
\end{align*}	
This verifies the identity \eqref{equation J(X) FPS3} and thus completes the proof of Theorem \ref{Theorem FPS2}.

\section{The Ratios Conjecture's prediction}\label{The ratios conjecture's prediction}
	
\subsection{The Ratios Conjecture}
	
In this section we formulate an appropriate version of the Ratios Conjecture. Such a calculation was already performed by Conrey and Snaith \cite{CS} in the family of $L$-functions associated to even real Dirichlet characters. For our purposes we need to derive an analogous conjecture with an additional smooth weight function for the family $\mathcal F^* (X)$.
	
To begin, we consider the sum
\begin{equation} \label{ralphgam}
R(\alpha,\gamma):=\frac1{W^*(X)}\sumn \frac{L\left(\frac{1}{2}+\alpha,\chi_{8d}\right)}{L\left(\frac{1}{2}+\gamma,\chi_{8d}\right)}.
\end{equation}
For $L(s,\chi_{8d})$ we have 
\begin{equation} \label{lemobius}
\frac{1}{L(s,\chi_{8d})} = \sum_{n=1}^\infty
\frac{\mu(n)\chi_{8d}(n)}{n^s}
\end{equation}
and the approximate functional equation
\begin{equation}
L\left(s, \chi_{8d}\right) = \sum_{n<x} \frac{\chi_{8d}(n)}{n^s}+\left(\frac{\pi}{8|d|}\right)^{s-\frac{1}{2}}
\frac{\Gamma\left(\frac{1+\mathfrak{a}-s}{2}\right)}{\Gamma\left(\frac{\mathfrak{a}+s}{2}\right)} \sum_{n<y}
\frac{\chi_{8d}(n)}{n^{1-s}}+\:{\rm Error},\label{approxeq}
\end{equation}
where $xy=4|d|/\pi$ and
\begin{equation}\label{equation mathfrak a}
\mathfrak{a}:=\mathfrak{a}(d)=\begin{cases} 0 & \text{ if }d\geq 0, \\ 1 & \text{ if } d<0.\end{cases}
\end{equation}
We now follow the Ratios Conjecture recipe \cite{CFZ} (see also the presentation in \cite{CS}) and disregard the error term and complete the sums (i.e.\ replace $x$ and $y$ with infinity).
	
The first step is to replace the numerator of $\eqref{ralphgam}$ with the approximate functional equation $\eqref{approxeq}$ (with the above modification) and the denominator of $\eqref{ralphgam}$ with $\eqref{lemobius}$. We first focus on the principal sum from $\eqref{approxeq}$  evaluated at $s=\frac{1}{2}+\alpha$, which gives the contribution
\begin{equation} \label{ragone}
R_1(\alpha, \gamma):=\frac1{W^*(X)}\sumn \sum_{h,m} \frac{\mu(h)\chi_{8d}(hm)}{h^{\frac{1}{2}+\gamma}m^{\frac{1}{2} +\alpha}}
\end{equation}
to \eqref{ralphgam}. The next step in the Ratios Conjecture procedure is to replace $\chi_{8d}(hm)$ in $\eqref{ragone}$ with its weighted average over the family $\Fam$. From \cite[Lemma 2.2 and Remark 2.3]{FPS2} we have that
\begin{align}\label{average coeff}
\frac{1}{W^*(X)}\sumn\chi_{8d}(hm)&=\begin{cases}\prod_{p\mid hm}\big(\frac{p}{p+1}\big)+O_{h,m,\varepsilon}\big(X^{-\frac34+\varepsilon}\big) & {\rm if} \: hm = \text{ odd }\square,\\
O_{h,m,\varepsilon}\big(X^{-\frac34+\varepsilon}\big)  & {\rm otherwise}.
\end{cases}
\end{align} 
Thus the main contribution to the sum in $\eqref{ragone}$ occurs when $hm$ is an odd square and following the recipe we disregard the non-square terms and the error terms. Hence $\eqref{ragone}$ is replaced with
\begin{equation*}
\widetilde R_1(\alpha, \gamma):=
\sum_{hm = \text{odd } \square} \frac{\mu(h)}{h^{\frac{1}{2}+\gamma}m^{\frac{1}{2} + \alpha}}\prod_{p\mid hm}\frac{p}{p+1}
\end{equation*}
and writing this as an Euler product gives
\begin{align*}
\widetilde R_1(\alpha, \gamma)&=\prod_{p>2}\Bigg(1+\frac{p}{p+1}\sum_{\substack{k,\ell \geq 0 \\ k+\ell>0 \\ k+\ell \:{\rm even}}}\frac{\mu(p^k)}{p^{k\left(\frac{1}{2}+\gamma\right)+\ell(\frac{1}{2}+\alpha)}}\Bigg)\\
&=\prod_{p>2}\Bigg(1+\frac{p}{p+1}\bigg(\sum_{\ell=1}^{\infty}\frac{1}{p^{\ell(1+2\alpha)}}-\frac1{p^{1+\alpha+\gamma}}\sum_{\ell=0}^{\infty}\frac{1}{p^{\ell(1+2\alpha)}}\bigg)\Bigg)\\
&=\prod_{p>2}\Bigg(1+\frac{p}{(p+1)(1-p^{-1-2\alpha})}\bigg(\frac1{p^{1+2\alpha}}-\frac1{p^{1+\alpha+\gamma}}\bigg)\Bigg).
\end{align*}
In the above Euler product we factor out zeta functions corresponding to the divergent parts of $\widetilde R_1(\alpha,\gamma)$ (as $\alpha,\gamma\to0$). This results in 
\begin{align}\label{ragtilde1}
\widetilde R_1(\alpha,\gamma)= \frac{\zeta(1+2\alpha)}{\zeta(1+\alpha+\gamma)}A(\alpha,\gamma),
\end{align}
where
\begin{equation}
A(\alpha,\gamma):=\left(\frac{2^{1+\alpha+\gamma}-2^{\gamma-\alpha}}{2^{1+\alpha+\gamma}-1}\right)\prod_{p>2}\left(1-\frac{1}{p^{1+\alpha+\gamma}}\right)^{-1}\left(1-\frac{1}{(p+1)p^{1+2\alpha}}-\frac{1}{(p+1)p^{\alpha+\gamma}}\right).\label{defnofae}
\end{equation}
Note that the product $A(\alpha,\gamma)$ is absolutely convergent for $\Re(\alpha),\Re(\gamma)>-\frac{1}4$. 
	
Next we consider the contribution of the dual sum coming from the approximate functional equation (the second sum in $\eqref{approxeq}$) to $\eqref{ralphgam}$, namely the sum
\begin{equation} \label{ragtwo}
R_2(\alpha, \gamma):=\frac1{W^*(X)}\sumn X_{d}\left(\tfrac{1}{2} + \alpha\right)\sum_{h,m} \frac{\mu(h)\chi_{8d}(hm)}
{h^{\frac{1}{2}+\gamma}m^{\frac{1}{2}-\alpha}},
\end{equation}
where
\begin{equation} \label{xe}
X_{d}(s):=\frac{\Gamma\left(\frac{1+\mathfrak{a}-s}{2}\right)}{\Gamma\left(\frac{\mathfrak{a}+s}{2}\right)}\left(\frac{\pi}{8|d|}\right)^{s-\frac{1}{2}}.
\end{equation}
As above we follow the Ratios Conjecture procedure and replace $\chi_{8d}(hm)$ in $\eqref{ragtwo}$ with its weighted average over the family $\Fam$. Using \eqref{average coeff}, we replace \eqref{ragtwo} with
\begin{equation*}
\widetilde R_2(\alpha, \gamma):=\frac{1}{W^*(X)}\sumn X_{d}\left(\tfrac{1}{2} + \alpha\right)\widetilde R_1(-\alpha,\gamma).
\end{equation*}
Finally, using the formula \eqref{ragtilde1} we state the Ratios Conjecture for our weighted family of quadratic Dirichlet $L$-functions as:
	
\begin{conjecture} \label{ratiosconjecture}
Let $\varepsilon>0$ and let $w$ be an even and nonnegative Schwartz test function on $\R$ which is not identically zero. Assume GRH and suppose that the complex numbers $\alpha$ and $\gamma$ satisfy $|\Re(\alpha)|< \frac{1}{4}$, $\frac{1}{\log X} \ll\Re(\gamma)<\frac{1}{4}$ and $\Im(\alpha),\Im(\gamma) \ll X^{1-\varepsilon}$. Then we have that
\begin{multline*}
\frac{1}{W^*(X)}\sumstar\frac{L(\frac{1}{2} +\alpha,\chi_{8d})}{L(\frac{1}{2} + \gamma,\chi_{8d})}= 
\frac{\zeta(1+2\alpha)}{\zeta(1+\alpha+\gamma)}A(\alpha,\gamma)\\
+\frac{1}{W^*(X)}\sumstar X_{d}\left(\tfrac{1}{2}+\alpha\right) \frac{\zeta(1-2\alpha)}{\zeta(1-\alpha+\gamma)}A(-\alpha,\gamma)+ O_\varepsilon\big(X^{-\frac12+\varepsilon}\big),
\end{multline*}
where $A(\alpha,\gamma)$ is defined in $\eqref{defnofae}$ and $X_d(s)$ is defined in $\eqref{xe}$.\footnote{The error term $O_{\varepsilon}(X^{-\frac12+\varepsilon})$ is part of the statement of the Ratios Conjecture. Note also that the extra conditions on $\alpha$ and $\gamma$, which were not used in the derivation of Conjecture \ref{ratiosconjecture}, are included here as standard conditions under which conjectures produced by the Ratios Conjecture recipe are expected to hold.}
\end{conjecture}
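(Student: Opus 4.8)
The plan is to carry the Conrey--Farmer--Zirnbauer recipe \cite{CFZ} begun above to its conclusion. At the present stage, $R(\alpha,\gamma)$ from \eqref{ralphgam} has been opened up by feeding the approximate functional equation \eqref{approxeq} into its numerator and the Dirichlet series \eqref{lemobius} into its denominator, and the resulting double Dirichlet series has been split into the principal piece $R_1(\alpha,\gamma)$ of \eqref{ragone} (from the first sum in \eqref{approxeq}), the dual piece $R_2(\alpha,\gamma)$ of \eqref{ragtwo} (from the second sum, carrying the factor $X_d(\tfrac12+\alpha)$), and an Error term that the recipe instructs us to discard. Replacing $\chi_{8d}(hm)$ by its weighted average \eqref{average coeff}, keeping only the odd-square terms and dropping the attendant error, has turned $R_1$ into $\widetilde R_1(\alpha,\gamma)=\tfrac{\zeta(1+2\alpha)}{\zeta(1+\alpha+\gamma)}A(\alpha,\gamma)$ with $A$ as in \eqref{defnofae}, holomorphic and nonvanishing near the origin; the same substitution in $R_2$ yields $\widetilde R_2(\alpha,\gamma)=\tfrac{1}{W^*(X)}\sumstarnoemph X_d(\tfrac12+\alpha)\,\widetilde R_1(-\alpha,\gamma)$.

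Three short steps then remain, all routine. First, one invokes the conclusion of the recipe, namely that $R(\alpha,\gamma)$ is predicted to equal $\widetilde R_1(\alpha,\gamma)+\widetilde R_2(\alpha,\gamma)$ up to a small error. Second, one substitutes the closed form $\widetilde R_1(-\alpha,\gamma)=\tfrac{\zeta(1-2\alpha)}{\zeta(1-\alpha+\gamma)}A(-\alpha,\gamma)$, obtained from \eqref{ragtilde1} by sending $\alpha\mapsto-\alpha$ --- which is legitimate since, by \eqref{defnofae}, the product $A(-\alpha,\gamma)$ still converges absolutely for $\Re(\alpha)<\tfrac14$ and $\Re(\gamma)>-\tfrac14$ --- into the formula for $\widetilde R_2(\alpha,\gamma)$. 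Third, one appends, \emph{as part of the conjectural statement rather than as an output of the derivation}, the error term $O_{\varepsilon}(X^{-1/2+\varepsilon})$ and the analyticity ranges $|\Re(\alpha)|<\tfrac14$, $\tfrac{1}{\log X}\ll\Re(\gamma)<\tfrac14$ and $\Im(\alpha),\Im(\gamma)\ll X^{1-\varepsilon}$; these are the conventional hypotheses under which predictions of the Ratios Conjecture recipe are believed to hold. Collecting these three steps gives exactly the identity asserted in Conjecture \ref{ratiosconjecture}.

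The honest point is that this is a conjecture, not a theorem, and the obstruction to a proof is genuine: the recipe has us discard the Error term in \eqref{approxeq}, complete the divisor-type sums over $h$ and $m$ to infinity, and replace $\chi_{8d}(hm)$ by its family average \eqref{average coeff} term by term even for large $h,m$, and none of these steps can be justified unconditionally. What one can do instead is assemble supporting evidence. On the one hand, for test functions whose Fourier transform is supported in $(-2,2)$, the prediction extracted from Conjecture \ref{ratiosconjecture} must coincide with the unconditional (under GRH) formula of \cite{FPS2}; establishing this agreement is the content of Theorem \ref{Theorem ratios transition}. On the other hand, there is a built-in diagonal consistency check: at $\alpha=\gamma$ the ratio in \eqref{ralphgam} is identically $1$, so $R(\alpha,\alpha)=1$, while on the right one has $A(\alpha,\alpha)=1$ (the prefactor of \eqref{defnofae} and every Euler factor collapse), hence $\widetilde R_1(\alpha,\alpha)=1$, whereas the pole of $\zeta(s)$ at $s=1$ forces $\widetilde R_1(-\alpha,\alpha)=0$ and therefore $\widetilde R_2(\alpha,\alpha)=0$; thus the conjectured right-hand side correctly reproduces the value $1$, up to the admissible error. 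Looking ahead, this conjecture will be fed --- via a contour-integral representation of $\Dstar$ and differentiation in $\alpha$ at the diagonal --- into the proof of Theorem \ref{oneleveldensityresult}, and I expect the bulk of the remaining work there to be bookkeeping of this kind rather than new analytic input.
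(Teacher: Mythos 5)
Your proposal matches the paper's own derivation: the statement is obtained exactly as you describe, by completing the CFZ recipe --- substituting $\widetilde R_1(-\alpha,\gamma)=\tfrac{\zeta(1-2\alpha)}{\zeta(1-\alpha+\gamma)}A(-\alpha,\gamma)$ into $\widetilde R_2$, adding $\widetilde R_1+\widetilde R_2$, and appending the error term $O_{\varepsilon}(X^{-1/2+\varepsilon})$ together with the standard ranges for $\alpha,\gamma$ as part of the conjectural statement rather than as consequences of the derivation. Your extra consistency checks (the diagonal $\alpha=\gamma$ evaluation using $A(r,r)=1$ and the agreement with \cite{FPS2}) are correct and consonant with the paper, which indeed uses $A(r,r)=1$ in Lemma \ref{ratiostheorem} and establishes the latter agreement in Theorem \ref{Theorem ratios transition}.
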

	
In our calculation of the $1$-level density (see Section \ref{1levelsection}), we require the average of the logarithmic derivative of the $L$-functions in $\Fam$. We set
\begin{equation}\label{AALPHAE}
A_{\alpha}(r,r) := \frac{\partial}{\partial \alpha}A(\alpha,\gamma)\bigg|_{\alpha=\gamma=r}.
\end{equation}
	
\begin{lemma} \label{ratiostheorem} 
Let $\varepsilon>0$ and let $w$ be an even and nonnegative Schwartz test function on $\R$ which is not identically zero. Suppose that $r \in \CC$ satisfies $\frac{1}{\log X} \ll\Re(r)<\frac{1}{4}$ and $\Im(r)\ll X^{1-\varepsilon}$. Then, assuming GRH and Conjecture \ref{ratiosconjecture}, we have that
\begin{multline}
\frac{1}{W^*(X)}\sumstar \frac{L'(\frac{1}{2} + r,\chi_{8d})}{L(\frac{1}{2} + r,\chi_{8d})}= \frac{\zeta'(1+2r)}{\zeta(1+2r)}+A_{\alpha}(r,r)\\
-\frac{1}{W^*(X)}\sumstar X_{d}\left(\tfrac{1}{2}+r\right) \zeta(1-2r)A(-r,r)+ O_{\varepsilon}\big(X^{-\frac12+\varepsilon}\big).\label{ratiostheoremthree}
\end{multline}
\end{lemma}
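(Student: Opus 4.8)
The plan is to obtain \eqref{ratiostheoremthree} by differentiating Conjecture \ref{ratiosconjecture} with respect to $\alpha$ and then specializing to $\alpha=\gamma=r$. Indeed, since $\partial_\alpha\big(L(\tfrac12+\alpha,\chi_{8d})/L(\tfrac12+\gamma,\chi_{8d})\big)=L'(\tfrac12+\alpha,\chi_{8d})/L(\tfrac12+\gamma,\chi_{8d})$, putting $\alpha=\gamma=r$ shows that the left-hand side of \eqref{ratiostheoremthree} is exactly $\partial_\alpha R(\alpha,\gamma)\big|_{\alpha=\gamma=r}$, with $R$ as in \eqref{ralphgam}. Under GRH the quotient $L(\tfrac12+\alpha,\chi_{8d})/L(\tfrac12+\gamma,\chi_{8d})$ is holomorphic in $\alpha$ for $\Re(\gamma)>0$ and the family average converges locally uniformly, so this term-by-term differentiation is legitimate, and it remains to differentiate the right-hand side of the conjecture.

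For the first main term $\zeta(1+2\alpha)\zeta(1+\alpha+\gamma)^{-1}A(\alpha,\gamma)$ the product rule gives
\[
\Big(2\frac{\zeta'(1+2\alpha)}{\zeta(1+\alpha+\gamma)}-\frac{\zeta(1+2\alpha)\zeta'(1+\alpha+\gamma)}{\zeta(1+\alpha+\gamma)^2}\Big)A(\alpha,\gamma)+\frac{\zeta(1+2\alpha)}{\zeta(1+\alpha+\gamma)}A_{\alpha}(\alpha,\gamma),
\]
and since at $\alpha=\gamma=r$ the two $\zeta$-arguments both equal $1+2r$ (whose real part exceeds $1$), no pole is encountered and this collapses to $\tfrac{\zeta'(1+2r)}{\zeta(1+2r)}A(r,r)+A_{\alpha}(r,r)$, with $A_\alpha$ as in \eqref{AALPHAE}. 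The one algebraic fact that makes everything fit is that $A(r,r)=1$: in \eqref{defnofae} the prefactor becomes $(2^{1+2r}-1)/(2^{1+2r}-1)=1$, while in each Euler factor the bracket $1-\tfrac{1}{(p+1)p^{1+2r}}-\tfrac{1}{(p+1)p^{2r}}$ equals $1-\tfrac{1+p}{(p+1)p^{1+2r}}=1-p^{-1-2r}$, which cancels the factor $(1-p^{-1-2r})^{-1}$. Hence the first main term contributes $\tfrac{\zeta'(1+2r)}{\zeta(1+2r)}+A_{\alpha}(r,r)$. For the second main term $X_d(\tfrac12+\alpha)\zeta(1-2\alpha)\zeta(1-\alpha+\gamma)^{-1}A(-\alpha,\gamma)$ the key point is that the factor $\zeta(1-\alpha+\gamma)^{-1}$ vanishes at $\alpha=\gamma$ (it is $1/\zeta(1)=0$) and its $\alpha$-derivative there equals $-1$, as one reads off from the expansion $\zeta(1+u)^{-1}=u+O(u^2)$ with $u=\gamma-\alpha$. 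Thus in the product rule only the term in which $\partial_\alpha$ lands on $\zeta(1-\alpha+\gamma)^{-1}$ survives the specialization $\alpha=\gamma=r$, and the second main term contributes exactly $-\tfrac{1}{W^*(X)}\sumstar X_d(\tfrac12+r)\zeta(1-2r)A(-r,r)$ (note that $A(-r,r)$ lies in the domain of absolute convergence of \eqref{defnofae} because $\Re(r)<\tfrac14$). With $X_d$ as in \eqref{xe}, these two contributions are precisely the explicit terms on the right of \eqref{ratiostheoremthree}.

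It remains to check that differentiating the error term $O_\varepsilon(X^{-1/2+\varepsilon})$ in Conjecture \ref{ratiosconjecture} again yields an error of the same quality; I expect this routine Cauchy-formula step, together with the pole bookkeeping above, to be the main technical point. Write $E(\alpha)$ for the difference between $R(\alpha,r)$ and the two explicit main terms evaluated at $\gamma=r$; the conjecture gives $E(\alpha)=O_\varepsilon(X^{-1/2+\varepsilon})$ uniformly on a disc $|\alpha-r|\le c/\log X$, which for small $c$ stays inside $|\Re(\alpha)|<\tfrac14$, avoids $\alpha=0$ (where $\zeta(1+2\alpha)$ and $\zeta(1-2\alpha)$ have poles), and keeps $\Im(\alpha)\ll X^{1-\varepsilon}$, and on which $E$ is holomorphic. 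Cauchy's integral formula then gives $E'(r)\ll (\log X)\,X^{-1/2+\varepsilon}\ll_\varepsilon X^{-1/2+\varepsilon}$ after a harmless adjustment of $\varepsilon$. Adding the three contributions yields \eqref{ratiostheoremthree}; everything else is bookkeeping with the product rule.
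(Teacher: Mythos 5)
Your proposal is correct and follows essentially the same route as the paper: differentiate the Ratios Conjecture in $\alpha$, set $\alpha=\gamma=r$, use $A(r,r)=1$ so the first term collapses to $\tfrac{\zeta'(1+2r)}{\zeta(1+2r)}+A_\alpha(r,r)$ while only the derivative hitting $\zeta(1-\alpha+\gamma)^{-1}$ survives in the dual term, and control the error by Cauchy's integral formula on a disc of radius $\asymp 1/\log X$. The extra details you supply (the explicit check that $A(r,r)=1$ and the expansion $\zeta(1+u)^{-1}=u+O(u^2)$) are exactly the computations the paper leaves implicit.
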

	
\begin{proof}
Observing that $A(r,r)=1$, we get 
\begin{equation*}
\frac{\partial}{\partial \alpha}\frac{\zeta(1+2\alpha)}{\zeta(1+\alpha+\gamma)}A(\alpha,
\gamma)\bigg|_{\alpha=\gamma=r} = \frac{\zeta'(1+2r)}{\zeta(1+2r)}+A_{\alpha}(r,r)
\end{equation*}
and 
\begin{multline*}
\frac{\partial}{\partial \alpha}\frac{1}{W^*(X)}\sumn X_{d}\left(\tfrac{1}{2}+\alpha\right)\frac{\zeta(1-2\alpha)}{\zeta(1-\alpha+\gamma)}A(-\alpha,\gamma)\bigg|_{\alpha=\gamma=r}\\
=-\frac{1}{W^*(X)}\sumn X_{d}\left(\tfrac{1}{2}+r\right)\zeta(1-2r)A(-r,r),
\end{multline*}
which gives the main term in \eqref{ratiostheoremthree}. Finally, a straightforward argument using Cauchy's integral formula for derivatives shows that the error term remains the same under differentiation. 
\end{proof}

\subsection{The Ratios Conjecture's prediction for the $1$-level density}\label{1levelsection}
	
In this section we derive a first formulation of the Ratios Conjecture's prediction for the $1$-level density of low-lying zeros in the family $\Fam$.

\begin{proof}[Proof of Theorem \ref{oneleveldensityresult}]
We recall that
\begin{equation*}
\Dstar=\frac{1}{W^*(X)}\sumn \sum_{\gamma_{8d}}\phi\left(\gamma_{8d}\frac{L}{2\pi}\right).
\end{equation*}
Using the argument principle, we obtain
\begin{equation} \label{cauchydens}
\Dstar=\frac{1}{W^*(X)}\sumn \frac{1}{2\pi i}\left(\int_{(c)} - \int_{(1-c)}\right)\frac{L'(s,\chi_{8d})}{L(s,\chi_{8d})}
\phi\left(\frac{-iL}{2\pi}\left(s-\frac{1}{2}\right)\right) ds
\end{equation}
with $\frac{1}{2}+\frac{1}{\log X}<c<\frac34$. For the integral in \eqref{cauchydens} on the line with real part $1-c$, we make the change of variables $s \mapsto 1-s$. Recalling that $\phi$ is even, we find that this integral equals
\begin{align}
\frac{1}{W^*(X)}\sumn \frac{1}{2\pi i} \int_{(c)} \frac{L'(1-s,\chi_{8d})}{L(1-s,\chi_{8d})}
\phi\left(\frac{-iL}{2\pi}\left(s-\frac{1}{2}\right)\right)ds.
\label{loneminuss}
\end{align}
Next, applying the functional equation
\begin{align*}
\Lambda(s,\chi_{8d}):=\left(\frac{8|d|}{\pi}\right)^{\frac{1}{2}(s+\mathfrak{a})}\Gamma\left(\frac{s+\mathfrak{a}}{2}\right)L(s,\chi_{8d})=\Lambda(1-s,\chi_{8d})
\end{align*}
(cf.\ \cite[Sect.\ 9]{D}), together with $\eqref{xe}$, we obtain
\begin{equation}\label{labellprimeoverl}\textbf{}
\frac{L'(s,\chi_{8d})}{L(s,\chi_{8d})} = \frac{X_{d}'(s)}{X_{d}(s)} -
\frac{L'(1-s,\chi_{8d})}{L(1-s,\chi_{8d})}\,.
\end{equation}
Using $\eqref{loneminuss}$ and $\eqref{labellprimeoverl}$ together with the change of variables $s = \tfrac 12 + r$, we obtain
\begin{align}
\Dstar
=& \frac{1}{W^*(X)}\sumn \frac{1}{2\pi i}\int_{(c-\tfrac 12)} \bigg(2 \frac{L'\left(\tfrac12 + r,\chi_{8d}\right)}{L\left(\tfrac12 + r,\chi_{8d}\right)} - \frac{X_{d}'\left(\tfrac12+r\right)}{X_{d}\left(\tfrac12+r\right)}\bigg) \phi\left(\frac{iLr}{2\pi}\right)dr.
\label{checknine}
\end{align}
		
Changing the order of summation and integration in \eqref{checknine}, we substitute
\begin{equation*}
\frac{1}{W^*(X)}\sumn\frac{L'\left(\tfrac12+r,\chi_{8d}\right)}{L\left(\tfrac12 + r,\chi_{8d}\right)}
\end{equation*}
with the right-hand side of $\eqref{ratiostheoremthree}$. Note that this substitution is valid only when $\Im(r)<X^{1-\varepsilon}$. However, since $\widehat \phi$ has compact support  on $\R$ the function $\phi\left(\frac{iLr}{2\pi}\right)$ is rapidly decaying as $|\Im(r)|\to\infty$. From this fact and the estimate \cite[Thm.\ 5.17]{IK} of the logarithmic derivative of Dirichlet $L$-functions, we can bound the tail of the integral in \eqref{checknine} by 	$O_{\varepsilon}\left(X^{-1+\varepsilon}\right)$. Furthermore, using a similar argument to bound the tail of the integral in \eqref{RATIOSONELEV}, we obtain
\begin{multline}
\Dstar=\frac{1}{W^*(X)}\sumn \frac{1}{2\pi i}\int_{(c-\tfrac 12)}  \bigg(2 \frac{\zeta'(1+2r)}{\zeta(1+2r)}+2A_{\alpha}(r,r)-\frac{X_{d}'\left(\tfrac12+r\right)}{X_{d}\left(\tfrac12+r\right)} \\
-2X_{d}\left(\tfrac{1}{2}+r\right)\zeta(1-2r)A(-r,r)\bigg) \phi\left(\frac{iLr}{2\pi}\right)dr + O_{\varepsilon}\big(X^{-\frac 12+\varepsilon}\big).\label{RATIOSONELEV}
\end{multline}
		
The final step is to move the contour of integration from $\Re(r)=c-\tfrac 12=c'$ to
$\Re(r)=0$. Note that the function $$2 \frac{\zeta'(1+2r)}{\zeta(1+2r)}+2A_{\alpha}(r,r)-\frac{X_{d}'\left(\tfrac12+r\right)}{X_{d}\left(\tfrac12+r\right)}-2X_{d}\left(\tfrac{1}{2}+r\right)\zeta(1-2r)A(-r,r)$$
is analytic in this region. Thus, by Cauchy's Theorem, we have that
\begin{align*} \nonumber
\Dstar=&\frac{1}{W^*(X)}\sumn \frac{1}{2\pi}
\int_{\mathbb R} \bigg(2 \frac{\zeta'(1+2it)}{\zeta(1+2it)} + 2A_{\alpha}(it,it)+\log\left(\frac{8|d|}{\pi}\right) +\frac 12\frac{\Gamma'}{\Gamma}\left(\frac{1}{4}+\frac{\mathfrak{a}-it}{2}\right)\\
+&\frac 12\frac{\Gamma'}{\Gamma}\left(\frac{1}{4}+\frac{\mathfrak{a}+it}{2}\right)-2 X_{d}\left(\tfrac{1}{2}+it\right)\zeta(1-2it)A(-it,it) \bigg) \,
\phi\left(\frac{tL}{2\pi}\right) \, dt+ O_{\varepsilon}\big(X^{-\frac 12+\varepsilon}\big),
\end{align*}
since $$\frac{X_{d}'(\tfrac 12+it)}{X_{d}(\tfrac 12+it)}=\log\left(\frac{\pi}{8|d|}\right) -\frac12\frac{\Gamma'}{\Gamma}\left(\frac{1}{4}+\frac{\mathfrak{a}-it}{2}\right)-\frac12\frac{\Gamma'}{\Gamma}\left(\frac{1}{4}+\frac{\mathfrak{a}+it}{2}\right).$$ 
This completes the proof.
\end{proof}

\section{Making the Ratios Conjecture's prediction explicit}\label{section compare results}

In this section we wish to compare the Ratios Conjecture's prediction for $\Dstar$ in Theorem~\ref{oneleveldensityresult} with the result in Theorem \ref{Theorem FPS2}. We recall the prediction obtained in \eqref{RATIOSONELEV}, which for  $\frac 1{\log X}<c'< \frac 14$ equals
\begin{multline}
\Dstar=\frac{1}{W^*(X)}\sumn \frac{1}{2\pi i}\int_{(c')}  \bigg(2 \frac{\zeta'(1+2r)}{\zeta(1+2r)}+2A_{\alpha}(r,r)+\log\left(\frac{8|d|}{\pi}\right) +\frac 12\frac{\Gamma'}{\Gamma}\left(\frac{1}{4}+\frac{\mathfrak{a}-r}{2}\right)\\
+\frac 12\frac{\Gamma'}{\Gamma}\left(\frac{1}{4}+\frac{\mathfrak{a}+r}{2}\right) 
-2X_{d}\left(\tfrac{1}{2}+r\right)\zeta(1-2r)A(-r,r)\bigg) \phi\left(\frac{iLr}{2\pi}\right)dr + O_{\varepsilon}\big(X^{-\frac 12+\varepsilon}\big).\label{Dstar ratios}
\end{multline}
To begin, we focus on the first two terms in $\eqref{Dstar ratios}$. Recalling $\eqref{defnofae}$, we write
$$A(\alpha,\gamma)=\left(1+\frac{2^{1+\alpha}-2^{1+\gamma}}{3\cdot 2^{1+2\alpha+\gamma}-2^\gamma-2^{1+\alpha}}\right)\prod_{p}\left(1-\frac{1}{p^{1+\alpha+\gamma}}\right)^{-1}\left(1-\frac{1}{(p+1)p^{1+2\alpha}}-\frac{1}{(p+1)p^{\alpha+\gamma}}\right).$$
Using the fact that $A(r,r)=1$, we compute 
$$A_\alpha(r,r)=\frac{2\log 2}{3(2^{1+2r}-1)}+\sum_p\frac{\log p}{(p+1)(p^{1+2r}-1)}.$$
Furthermore, we have that
\begin{equation*}
\frac{\zeta'(1+2r)}{\zeta(1+2r)}=-\sum_{p} \frac{\log p}{p^{1+2r}-1}.
\end{equation*} 
We now have the following result. 
	
\begin{lemma} \label{lemma: prime sum}
Let $\eps>0$ and suppose that $\tfrac1{\log X}<c'<\tfrac 14$. Then
\begin{equation}
\frac{1}{2\pi i}\int_{(c')}  \bigg(2 \frac{\zeta'(1+2r)}{\zeta(1+2r)}+2A_{\alpha}(r,r)\bigg) \phi\left(\frac{iLr}{2\pi}\right)dr	=- \frac 2{L   }\sum_{\substack{p>2 \\ j\geq 1}} \frac{\log p}{p^{j}} \left(  1+\frac 1p\right)^{-1} \widehat \phi\left( \frac{2j \log p}{L} \right).
\label{equation lemma 4.1}
\end{equation}
\end{lemma}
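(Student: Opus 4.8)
The plan is to combine the two terms $2\zeta'(1+2r)/\zeta(1+2r)$ and $2A_{\alpha}(r,r)$ into a single sum over odd primes, expand this sum as a Dirichlet series in $p^{-2jr}$, and then evaluate the contour integral term by term.

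First I would substitute the two displayed formulas for $\zeta'(1+2r)/\zeta(1+2r)$ and $A_\alpha(r,r)$ recorded just above the lemma. A direct computation shows that the $p=2$ contributions cancel exactly: the term $2\zeta'/\zeta$ contributes $-2\log 2/(2^{1+2r}-1)$, while $2A_\alpha$ contributes $\tfrac{4\log 2}{3(2^{1+2r}-1)}+\tfrac{2\log 2}{3(2^{1+2r}-1)}=\tfrac{2\log 2}{2^{1+2r}-1}$, and the two are equal and opposite. For $p>2$ the two sums combine via $\tfrac{1}{p+1}-1=-\tfrac{p}{p+1}$, giving
\[
2\frac{\zeta'(1+2r)}{\zeta(1+2r)}+2A_\alpha(r,r)=-\sum_{p>2}\frac{2p\log p}{(p+1)(p^{1+2r}-1)}.
\]

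Next, on the line $\Re r=c'\in(0,\tfrac14)$ one has $|p^{-1-2r}|=p^{-1-2c'}<1$, so I would write $\tfrac{1}{p^{1+2r}-1}=\sum_{j\ge 1}p^{-j(1+2r)}$ and interchange the resulting double sum with the integral. This is legitimate since $\sum_{p>2}\tfrac{2p\log p}{p+1}\sum_{j\ge1}p^{-j(1+2c')}\ll_{c'}1$ while $\int_{(c')}|\phi(iLr/2\pi)|\,|dr|<\infty$; here one uses that $\phi$ is Schwartz with $\widehat\phi$ compactly supported, so $\phi$ extends to an entire function decaying faster than any polynomial along horizontal lines, uniformly for $\Re r$ in a fixed compact set. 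The task then reduces to evaluating, for each pair $(p,j)$,
\[
I_{p,j}:=\frac{1}{2\pi i}\int_{(c')}p^{-2jr}\,\phi\!\left(\frac{iLr}{2\pi}\right)dr.
\]
Using the decay just noted I would shift this contour to $\Re r=0$ (the integrand is entire), put $r=it$, and invoke that $\phi$ is even so as to identify $I_{p,j}$ with a Fourier transform: with the normalization $\widehat\phi(\xi)=\int_{\R}\phi(x)e^{-2\pi ix\xi}\,dx$ one has $\tfrac{1}{2\pi}\int_{\R}\phi(Lt/2\pi)e^{-ivt}\,dt=\tfrac1L\widehat\phi(v/L)$, whence $I_{p,j}=\tfrac1L\widehat\phi\!\big(\tfrac{2j\log p}{L}\big)$. (Since $\widehat\phi$ has compact support only finitely many $(p,j)$ contribute, so no convergence issue survives at the end.)

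Reassembling and using $\tfrac{2p\log p}{p+1}\,p^{-j}=\tfrac{2\log p}{p^{j}}\big(1+\tfrac1p\big)^{-1}$ then yields exactly the right-hand side of \eqref{equation lemma 4.1}. The only point requiring genuine care is the justification of the sum--integral interchange and the subsequent contour shift; both rest on the absolute convergence of the prime sum for $\Re r>0$ together with the rapid decay of $\phi$ along horizontal lines, the latter a standard consequence of $\widehat\phi$ having compact support. Everything else is routine bookkeeping.
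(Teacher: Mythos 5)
Your proposal is correct and follows essentially the same route as the paper: substitute the displayed formulas for $\zeta'/\zeta$ and $A_\alpha$, note the cancellation at $p=2$, expand $(p^{1+2r}-1)^{-1}$ as a geometric series, interchange sum and integral, shift the contour to $\Re(r)=0$ using the rapid decay of $\phi$ along horizontal lines (a consequence of $\widehat\phi$ having compact support), and identify each term as $\tfrac1L\widehat\phi\big(\tfrac{2j\log p}{L}\big)$. The only cosmetic differences are that the paper performs the $p=2$ cancellation at the very end and shifts the contour once for the full sum rather than termwise.
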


\begin{remark}
In \cite[Lemma 3.7]{FPS2}, it was shown that the right-hand side of \eqref{equation lemma 4.1} is asymptotic to $-\phi(0)/2$ as $X\rightarrow \infty$. The Katz-Sarnak prediction \eqref{equation katz-sarnak prediciton} is obtained by combining this term with the main terms occuring in Lemmas \ref{lemma logd} and \ref{lemma I evaluation}.
\end{remark}

\begin{proof}[Proof of Lemma \ref{lemma: prime sum}]
We have that
\begin{align}
\frac{1}{2\pi i}\int_{(c')}&  \bigg(2 \frac{\zeta'(1+2r)}{\zeta(1+2r)}+2A_{\alpha}(r,r) \bigg) \phi\left(\frac{iLr}{2\pi}\right)dr\nonumber\\
=&\frac{1}{\pi i}\int_{(c')}\bigg(-\sum_{p}\frac{\log p}{p^{1+2r}-1}+\frac{2\log 2}{3(2^{1+2r}-1)}+\sum_p\frac{\log p}{(p+1)(p^{1+2r}-1)}\bigg)\phi\left(\frac{ iLr}{2 \pi}\right)dr\nonumber\\
=&\frac{1}{\pi i}\int_{(c')}\bigg(\frac{2\log 2}3\sum_{j=1}^{\infty}\frac{1}{2^{(1+2r)j}}-\sum_p \log p\sum_{j=1}^{\infty}\frac{1}{p^{(1+2r)j}}+\sum_p\frac{\log p}{(p+1)}\sum_{j=1}^{\infty}\frac{1}{p^{(1+2r)j}}\bigg)\phi\left(\frac{ iLr}{2 \pi}\right)dr\nonumber\\
=&\frac{1}{\pi i}\int_{(c')}\bigg(\frac{2\log 2}3\sum_{j=1}^{\infty}\frac{1}{2^{(1+2r)j}} -\sum_p \frac{p\log p}{p+1}\sum_{j=1}^{\infty}\frac{1}{p^{(1+2r)j}}\bigg)\phi\left(\frac{ iLr}{2 \pi}\right)dr.
\label{ANOTHERONELEVELDENSITY}
\end{align}
Making the substitution $u=-\frac{iLr}{2\pi}$, we have that  $\eqref{ANOTHERONELEVELDENSITY}$ becomes
\begin{equation}
\frac{2}{L}\int_{\mathcal C'}
\Bigg(\frac{2\log 2}{3} \sum_{j=1}^{\infty} \frac{e^{-(\log 2)\left(\frac{4\pi i uj}{L}\right)}}{2^{j}}-\sum_p \frac{p\log p}{p+1} \sum_{j=1}^{\infty} \frac{e^{-(\log p)\left(\frac{4\pi i uj}{L}\right)}}{p^{j}}\Bigg)\phi\left(u\right) du,\label{theseventwo}
\end{equation}
where $\mathcal C'$ denotes the horizontal line $\Im(u)=-\frac{Lc'}{2\pi}$. We note that the summations inside the integral over $\mathcal C'$ in $\eqref{theseventwo}$ converge absolutely and uniformly on compact subsets. Thus we may interchange the order of integration and summation and $\eqref{theseventwo}$ becomes
\begin{align*}
\frac{4\log 2}{3L} \sum_{j=1}^{\infty} \frac{1}{2^{j}}\int_{\mathcal C'}\phi\left(u\right)e^{-2\pi i u\left(\frac{2j\log 2}{L}\right)}du-\frac{2}{L}\sum_p \frac{p\log p}{p+1} \sum_{j=1}^{\infty} \frac{1}{p^{j}}\int_{\mathcal C'}\phi\left(u\right)e^{-2\pi i u\left(\frac{2j\log p}{L}\right)}du.
\end{align*}
Next, we move the contour of integration from $\mathcal C'$ to the line $\Im(u) =0$. Note that this is allowed since $\widehat \phi$ has compact support on $\R$ and the entire function $\phi(z):=\int_{\R}\widehat\phi(x)e^{2\pi ixz}\,dx$ satisfies the inequality
\begin{align*}
\left|\phi(T+it)\right|\leq\frac1{2\pi|T|}\int_{\R}\big|\widehat\phi'(x)\big|\max(1,e^{xLc'})\,dx,
\end{align*}
uniformly for $-\frac{Lc'}{2\pi}\leq t\leq0$, as $T\to\pm\infty$. Thus \eqref{ANOTHERONELEVELDENSITY} equals
\begin{align*}
-\frac{2}{L}\sum_{p>2} \frac{p\log p}{p+1} \sum_{j=1}^{\infty} \frac{1}{p^{j}}\widehat \phi\left(\frac{2j\log p}{L}\right),
\end{align*}
which concludes the proof.
\end{proof}

We now study the third, fourth and fifth terms in \eqref{Dstar ratios}. For these terms we can shift the line of integration to the imaginary axis, since the integrands are analytic in this region. We now give estimates for these shifted integrals.

\begin{lemma} \label{lemma logd}
Fix $\varepsilon>0$. We have
\begin{multline*} 
\frac{1}{W^*(X)}\sumstar \frac{1}{2\pi}
\int_{\mathbb R} \log\left(\frac{8|d|}{\pi}\right)\phi\left(\frac{tL}{2\pi}\right) \, dt\\
=\widehat \phi(0)+ \frac{\widehat \phi(0)}{L}\bigg(\log\left(2^4 e \right)+ \frac 2{\widehat w(0)}\int_0^{\infty} w(x) (\log x) \,dx\bigg)   +O_{\varepsilon,w}\big(X^{-\frac 12 +\varepsilon}\big).  
\end{multline*}
\end{lemma}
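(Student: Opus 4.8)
The plan is to perform the $t$-integral first, which reduces the statement to an asymptotic for the weighted average of $\log(8|d|/\pi)$ over the family, and then to read off that asymptotic from a standard Mellin transform computation. For this first step, since $\log(8|d|/\pi)$ does not depend on $t$, pulling it out of the integral and substituting $u=tL/(2\pi)$ gives
\[
\frac{1}{2\pi}\int_{\mathbb R}\log\Big(\frac{8|d|}{\pi}\Big)\phi\Big(\frac{tL}{2\pi}\Big)\,dt=\frac{\log(8|d|/\pi)}{L}\int_{\mathbb R}\phi(u)\,du=\frac{\widehat\phi(0)}{L}\log\Big(\frac{8|d|}{\pi}\Big).
\]
Thus the left-hand side of Lemma \ref{lemma logd} equals $\dfrac{\widehat\phi(0)}{L}\cdot\dfrac{1}{W^*(X)}\sumstarnoemph\log\big(\tfrac{8|d|}{\pi}\big)$, and it remains to prove
\[
\frac{1}{W^*(X)}\sumstarnoemph\log\Big(\frac{8|d|}{\pi}\Big)=L+\log(2^4e)+\frac{2}{\widehat w(0)}\int_0^\infty w(x)(\log x)\,dx+O_{\varepsilon,w}\big(X^{-\frac12+\varepsilon}\big);
\]
the lemma then follows by multiplying through by $\widehat\phi(0)/L$ and absorbing the harmless factor $1/L$ into the error.

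To prove this identity I would write $|d|=X\cdot(|d|/X)$ and use $\log X=L+\log(2\pi e)$ together with $\log(8/\pi)+\log(2\pi e)=\log(16e)=\log(2^4e)$, so that $\log(8|d|/\pi)=L+\log(2^4e)+\log(|d|/X)$ and the average splits as $L+\log(2^4e)+\tfrac{1}{W^*(X)}\sumstarnoemph\log(|d|/X)$. By evenness of $w$ one may sum over $d\geq1$ throughout, the factor $2$ cancelling in the ratio. Since $\sum_{d\geq1,\,d\text{ odd squarefree}}d^{-s}=\zeta(s)\zeta(2s)^{-1}(1+2^{-s})^{-1}$ for $\Re(s)>1$ and the Mellin transform of $x\mapsto w(x)\log x$ is $(\mathcal Mw)'$, Mellin inversion represents both $W^*(X)$ and $\sumstarnoemph\log(d/X)$ as integrals over $\Re(s)=2$ of $\frac{\zeta(s)}{\zeta(2s)(1+2^{-s})}X^s$ against $\mathcal Mw(s)$ and $(\mathcal Mw)'(s)$ respectively. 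In the strip $\tfrac12+\varepsilon\leq\Re(s)\leq2$ the only singularity of either integrand is the simple pole of $\zeta(s)$ at $s=1$ --- there $\zeta(2s)\neq0$, since every zero $\rho/2$ of $s\mapsto\zeta(2s)$ has real part $<\tfrac12$, and $1+2^{-s}$, $\mathcal Mw(s)$ and $(\mathcal Mw)'(s)$ are all regular --- so shifting both contours to $\Re(s)=\tfrac12+\varepsilon$ picks up the residues $\tfrac{2X}{3\zeta(2)}\mathcal Mw(1)$ and $\tfrac{2X}{3\zeta(2)}(\mathcal Mw)'(1)$, while the integrals over the new line are $O_{\varepsilon,w}(X^{1/2+\varepsilon})$ by the rapid decay of $\mathcal Mw$ and $(\mathcal Mw)'$ on vertical lines against polynomial growth bounds for $\zeta$. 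Since $\mathcal Mw(1)=\int_0^\infty w=\tfrac12\widehat w(0)$ and $(\mathcal Mw)'(1)=\int_0^\infty w(x)\log x\,dx$, this gives $W^*(X)=\tfrac{X\widehat w(0)}{3\zeta(2)}+O(X^{1/2+\varepsilon})$ and $\sumstarnoemph\log(d/X)=\tfrac{2X}{3\zeta(2)}\int_0^\infty w(x)\log x\,dx+O(X^{1/2+\varepsilon})$, whence the ratio $\tfrac{1}{W^*(X)}\sumstarnoemph\log(d/X)$ equals $\tfrac{2}{\widehat w(0)}\int_0^\infty w(x)(\log x)\,dx+O_{\varepsilon,w}(X^{-1/2+\varepsilon})$, which is what was needed.

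The computation is conceptually routine --- the whole formula is governed by the residue at $s=1$ --- and it uses neither GRH nor the Ratios Conjecture. The hard part, such as it is, will be the error-term bookkeeping in the Mellin analysis: one has to check that the contour may be pushed all the way to $\Re(s)=\tfrac12+\varepsilon$ and that the shifted integrals converge. It is exactly the zeros of $\zeta(2s)$ --- which lie on $\Re(s)=\tfrac14$ under GRH, but in any case have real part $<\tfrac12$ --- that obstruct moving further left, and this is what pins down the exponent $-\tfrac12$; convergence of the shifted integrals rests only on the Schwartz decay of $w$, hence of $\mathcal Mw$ and $(\mathcal Mw)'$, on vertical lines. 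All of these estimates are of the same kind as those already carried out in \cite{FPS2} and can be imported with only cosmetic changes; the sole genuinely arithmetic step is tracking the constant through $\log(8/\pi)+\log(2\pi e)=\log(2^4e)$.
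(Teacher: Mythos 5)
Your proposal is correct and follows essentially the same route as the paper, which simply cites \cite[Lemma 2.4]{FPS2}: perform the $t$-integral to reduce to the weighted average of $\log(8|d|/\pi)$, then evaluate $W^*(X)$ and the $\log(|d|/X)$-sum by Mellin inversion with the Dirichlet series $\zeta(s)/\big(\zeta(2s)(1+2^{-s})\big)$, picking up the residues at $s=1$ and shifting to $\Re(s)=\tfrac12+\varepsilon$. The constant bookkeeping ($\log(8/\pi)+\log(2\pi e)=\log(2^4e)$, $\mathcal Mw(1)=\tfrac12\widehat w(0)$, $(\mathcal Mw)'(1)=\int_0^\infty w(x)\log x\,dx$) and your positive-$d$ normalization (with the factor $2$ cancelling in the ratio) are all consistent with the stated result.
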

	
\begin{proof}
The result follows as in \cite[Lemma 2.4]{FPS2} (see also \cite[Lemma 2.8]{FPS}).
\end{proof}
	
\begin{lemma} \label{lemma gammas}
We have
\begin{multline*} 
\frac{1}{{W}^*(X)}\sumstar \frac{1}{4\pi}
\int_{\mathbb R} \left(\frac{\Gamma'}{\Gamma}\left(\frac{1}{4}+\frac{\mathfrak{a}-it}{2}\right)
+\frac{\Gamma'}{\Gamma}\left(\frac{1}{4}+\frac{\mathfrak{a}+it}{2}\right)\right)\phi\left(\frac{tL}{2\pi}\right) \, dt\\
= \frac{\widehat{\phi}(0)}{L}\log\big(2^{-3}e^{-\gamma}\big)+ \frac{1}{L}\int_0^\infty\frac{e^{-x/2}+e^{-3x/2}}{1-e^{-2x}}\left(\widehat{\phi}(0)-\widehat{\phi}\left(\frac{x}{L}\right)\right) dx.
\end{multline*}
\end{lemma}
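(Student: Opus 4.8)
The plan is to evaluate the left-hand side by inserting the integral representation of $\Gamma'/\Gamma$ and then carrying out the $t$-integration against $\phi(tL/2\pi)$, which converts it into an expression involving $\widehat\phi$. First I would use the classical formula
\[
\frac{\Gamma'}{\Gamma}(s)=-\gamma+\int_0^\infty\frac{e^{-u}-e^{-su}}{1-e^{-u}}\,du,
\]
valid for $\Re(s)>0$, applied with $s=\tfrac14+\tfrac{\mathfrak a\mp it}2$. After the change of variables $u=2x$ this puts a factor $e^{-(\frac12+\mathfrak a)x}$ (times $e^{\pm itx}$) inside, and summing the two terms produces the symmetric combination $e^{-(\frac12+\mathfrak a)x}(e^{itx}+e^{-itx})=2e^{-(\frac12+\mathfrak a)x}\cos(tx)$, while the $-\gamma$ and the $e^{-u}$-piece combine into a $t$-independent constant. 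At this stage the outer average over $d$ only sees the parity $\mathfrak a=\mathfrak a(d)$ through $e^{-\mathfrak a x}$, so I would split according to $\mathfrak a=0$ and $\mathfrak a=1$; since the $\tfrac1{W^*(X)}\sumstar$ weights are asymptotically equidistributed between $d>0$ and $d<0$ (as in \cite[Lemma 2.2]{FPS2}), the average of $e^{-\mathfrak a x}$ is $\tfrac12(1+e^{-x})$ up to a negligible error, which turns $e^{-(\frac12+\mathfrak a)x}$ into $\tfrac12(e^{-x/2}+e^{-3x/2})$ — precisely the kernel appearing in the claimed formula. (Any error from the equidistribution is $O(X^{-\frac12+\varepsilon})$ and, being multiplied by a convergent $x$-integral and $\widehat\phi$-values that are $O(1)$, is absorbed; strictly speaking one should state the lemma with such an error term, but it is harmless at the level of precision needed for Theorem \ref{Theorem ratios transition}.)

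Next I would perform the $t$-integration. Writing $\phi(tL/2\pi)$ and using $\int_{\R}\phi(tL/2\pi)\cos(tx)\,dt=\tfrac{2\pi}{L}\widehat\phi(x/L)$ together with $\int_{\R}\phi(tL/2\pi)\,dt=\tfrac{2\pi}{L}\widehat\phi(0)$, the $t$-independent constant contributes $\tfrac{\widehat\phi(0)}{L}$ times that constant, and the $\cos(tx)$-piece contributes $\tfrac1L\int_0^\infty(\text{kernel})\,\widehat\phi(x/L)\,dx$. Assembling, the left-hand side becomes
\[
\frac{\widehat\phi(0)}{L}\,C-\frac1L\int_0^\infty\frac{e^{-x/2}+e^{-3x/2}}{1-e^{-2x}}\,\widehat\phi\!\left(\frac xL\right)dx
\]
for a constant $C$ coming from the $-\gamma$ term and the $e^{-u}$ term. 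Since $\int_0^\infty\frac{e^{-x/2}+e^{-3x/2}}{1-e^{-2x}}\,dx$ diverges, I would not evaluate $C$ and this divergent integral separately; instead I would regroup as $\widehat\phi(0)-\widehat\phi(x/L)$ inside the integral (which is integrable, as the kernel is $\asymp x^{-1}$ near $0$ and $\widehat\phi(0)-\widehat\phi(x/L)=O(x/L)$ there), at the cost of adding back $\tfrac{\widehat\phi(0)}{L}\int_0^\infty\frac{e^{-x/2}+e^{-3x/2}}{1-e^{-2x}}\,dx$. The remaining purely constant coefficient of $\widehat\phi(0)/L$ is then a finite number, and identifying it with $\log(2^{-3}e^{-\gamma})$ is a bounded computation: the divergent tail of $\int_0^\infty\frac{e^{-x/2}+e^{-3x/2}}{1-e^{-2x}}\,dx$ is cancelled by the divergence hidden in $C$, leaving $-\gamma-3\log 2$ after using standard values such as $\int_0^\infty\big(\frac{e^{-u}}{1-e^{-u}}-\frac{1}{e^{x/2}(\cdots)}\big)$-type evaluations, or equivalently by recognizing $\tfrac12\big(\tfrac{\Gamma'}{\Gamma}(\tfrac14)+\tfrac{\Gamma'}{\Gamma}(\tfrac34)\big)=-\gamma-3\log 2$ via the reflection and duplication formulas.

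The main obstacle is the bookkeeping around the two divergent quantities: the raw $x$-integral of the kernel and the constant $C$ are each divergent, and only their combination is finite, so one must be careful to regularize consistently (e.g., cut off at $x=\eta$, match the $\log(1/\eta)$ terms, and let $\eta\to0$) rather than manipulate divergent expressions formally. Once the regularization is set up, identifying the finite constant as $-\gamma-3\log 2$ is the one genuine computation, and it is most cleanly done by evaluating $\tfrac12\big(\tfrac{\Gamma'}{\Gamma}(\tfrac14)+\tfrac{\Gamma'}{\Gamma}(\tfrac34)\big)$: by the reflection formula $\tfrac{\Gamma'}{\Gamma}(\tfrac14)+\tfrac{\Gamma'}{\Gamma}(\tfrac34)=\tfrac{\Gamma'}{\Gamma}(\tfrac14)-\tfrac{\Gamma'}{\Gamma}(\tfrac14)+\pi\cot(\pi/4)$ is not quite it, so instead I would use the duplication formula $\tfrac{\Gamma'}{\Gamma}(\tfrac14)+\tfrac{\Gamma'}{\Gamma}(\tfrac34)=2\tfrac{\Gamma'}{\Gamma}(\tfrac12)-2\log2=2(-\gamma-2\log 2)-2\log 2=-2\gamma-6\log 2$, giving the half as $-\gamma-3\log 2=\log(2^{-3}e^{-\gamma})$, exactly as claimed. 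Everything else is routine contour-free manipulation, and the final answer has no error term because the $O(X^{-1/2+\varepsilon})$ from $d$-equidistribution is dominated and, per the statement as written, suppressed.
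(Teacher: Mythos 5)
Your proposal is correct and in substance follows the same route as the paper: the paper changes variables to $u=tL/2\pi$, splits the family according to the sign of $d$ (i.e.\ $\mathfrak{a}=0$ or $\mathfrak{a}=1$), and then quotes \cite[Lemma 12.14]{MV} to produce the kernel $\frac{e^{-x/2}+e^{-3x/2}}{1-e^{-2x}}$ together with the constant $\frac12\big(\frac{\Gamma'}{\Gamma}(\frac 14)+\frac{\Gamma'}{\Gamma}(\frac 34)\big)=-\gamma-3\log 2$; what you do is rederive that cited lemma by hand from the integral representation of $\Gamma'/\Gamma$, with the same regularization of the two individually divergent pieces and the same duplication-formula evaluation of the constant, so the content is identical.

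One point in your write-up should be corrected: the average of $e^{-\mathfrak{a}(d)x}$ over the family requires no equidistribution input from \cite[Lemma 2.2]{FPS2} and carries no $O(X^{-\frac12+\varepsilon})$ error, so there is no need to ``strictly speaking'' restate the lemma with an error term. Since $w$ is even and $d\mapsto -d$ preserves both oddness and squarefreeness, the weighted sum over $d<0$ equals term by term the weighted sum over $d>0$; hence the average of $e^{-\mathfrak{a}(d)x}$ is exactly $\frac12\left(1+e^{-x}\right)$, and the identity in the lemma is exact as stated. This is precisely why the paper's version has no error term, in contrast to Lemmas \ref{lemma logd} and \ref{lemma: d pretentious :p}, where genuine arithmetic information about the family enters.
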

	
\begin{proof}
We have that
\begin{align*}
\frac{1}{{W}^*(X)}&\sumstarnoemph \frac{1}{4\pi}
\int_{\mathbb R} \left(\frac{\Gamma'}{\Gamma}\left(\frac{1}{4}+\frac{\mathfrak{a}-it}{2}\right)
+\frac{\Gamma'}{\Gamma}\left(\frac{1}{4}+\frac{\mathfrak{a}+it}{2}\right)\right)\phi\left(\frac{tL}{2\pi}\right) \, dt\\
=&\frac{1}{4L}\int_{\RR}\left(\frac{\Gamma'}{\Gamma}\left(\frac{1}{4}+\frac{\pi i u}{L}\right)+\frac{\Gamma'}{\Gamma}\left(\frac{1}{4}-\frac{\pi i u}{L}\right)+\frac{\Gamma'}{\Gamma}\left(\frac{3}{4}+\frac{\pi i u}{L}\right)+\frac{\Gamma'}{\Gamma}\left(\frac{3}{4}-\frac{\pi i u}{L}\right) \right)\phi(u)\, du\\
=&\left(\frac{\Gamma'}{\Gamma}\left(\frac 14\right) + \frac{\Gamma'}{\Gamma}\left(\frac 34\right)\right)\frac{\widehat{\phi}(0)}{2L} +\frac{1}{L}\int_0^\infty\frac{e^{-x/2}+e^{-3x/2}}{1-e^{-2x}}\left(\widehat{\phi}(0)-\widehat{\phi}\left(\frac{x}{L}\right)\right) dx,
\end{align*}
from  \cite[Lemma 12.14]{MV}. The result follows.
\end{proof}
	
The next lemma is required to evaluate the last term in \eqref{Dstar ratios}.
	
\begin{lemma}\label{lemma: d pretentious :p}
Let $\eps>0$ and assume that $0\leq \Re(r) \leq \frac 12$. Then  we have the estimate
\begin{equation*}
\frac{1}{W^*(X)}\sumstar |d|^{-r} = \frac{2}{\widehat{w}(0)}X^{-r}\mathcal M w(1-r) +O_{\eps,w}\left(\big(|\Im(r)|+1\big)^{\frac {13}{84}+\eps}X^{-\frac12-\Re (r)+\eps}\right).
\end{equation*}
\end{lemma}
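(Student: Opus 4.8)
The plan is to evaluate the numerator $\sumstar|d|^{-r}$ and the normalizing factor $W^*(X)$ separately by Mellin inversion and then divide. Since $w$ is even, both of these sums over all odd squarefree $d$ (with $1\le|d|\le\infty$, i.e.\ unrestricted in size apart from the decay of $w$) equal twice the corresponding sums over \emph{positive} odd squarefree $d$, and the factor $2$ cancels in the ratio. So it suffices to understand $S(X;r):=\sum_{d\geq1,\ d\text{ odd squarefree}}d^{-r}w(d/X)$ uniformly for $0\le\Re(r)\le\tfrac12$, noting that $W^*(X)=2S(X;0)$ and $\sumstar|d|^{-r}=2S(X;r)$.

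First I would insert $w(d/X)=\frac1{2\pi i}\int_{(2)}\mathcal M w(s)(X/d)^s\,ds$ (valid since $w$ is Schwartz, so $\mathcal M w$ is holomorphic for $\Re(s)>0$ and of rapid decay in vertical strips) and interchange sum and integral, which is legitimate because $\Re(s+r)=2>1$ on the line $\Re(s)=2$. This gives $S(X;r)=\frac1{2\pi i}\int_{(2)}\mathcal M w(s)\,X^s\,D(s+r)\,ds$, where $D(v):=\sum_{d\geq1,\ d\text{ odd squarefree}}d^{-v}=\zeta(v)\big(\zeta(2v)(1+2^{-v})\big)^{-1}$ by the Euler product. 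In the strip $\tfrac12-\Re(r)+\eps\le\Re(s)\le 2$ the integrand has exactly one singularity, the simple pole at $s=1-r$ coming from $\zeta(s+r)$: there $\mathcal M w(s)$ is holomorphic (as $\Re(s)\ge\eps>0$), $1/\zeta(2s+2r)$ is holomorphic since $\Re(2s+2r)\ge1+2\eps>1$ lies in the classical zero-free region, and $1+2^{-s-r}$ does not vanish. Shifting the contour to $\Re(s)=\tfrac12-\Re(r)+\eps$ picks up the residue $\mathcal M w(1-r)X^{1-r}\big(\zeta(2)(1+2^{-1})\big)^{-1}=\tfrac{2}{3\zeta(2)}\mathcal M w(1-r)X^{1-r}$.

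Next I would bound the shifted integral. On the line $\Re(s)=\tfrac12-\Re(r)+\eps$ one has $\Re(s+r)=\tfrac12+\eps$, so $\zeta(s+r)\ll_\eps(1+|\Im(s+r)|)^{13/84+\eps}$ by Bourgain's subconvexity bound $\zeta(\tfrac12+it)\ll_\eps(1+|t|)^{13/84+\eps}$ (which extends to $\Re(s)\ge\tfrac12$ since the Lindel\"of exponent is nonincreasing there), while $\Re(2s+2r)>1$ keeps $1/\zeta(2s+2r)$ and $(1+2^{-s-r})^{-1}$ bounded. Combining this with the rapid decay of $\mathcal M w(\tfrac12-\Re(r)+\eps+it)$ as $|t|\to\infty$ and the elementary inequality $(1+|t+\Im r|)^{13/84+\eps}\le(1+|t|)^{13/84+\eps}(1+|\Im r|)^{13/84+\eps}$, the shifted integral is $O_{\eps,w}\big((1+|\Im r|)^{13/84+\eps}X^{1/2-\Re r+\eps}\big)$. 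Hence $S(X;r)=\tfrac{2}{3\zeta(2)}\mathcal M w(1-r)X^{1-r}+O_{\eps,w}\big((1+|\Im r|)^{13/84+\eps}X^{1/2-\Re r+\eps}\big)$; in particular, taking $r=0$ and using $\mathcal M w(1)=\tfrac12\widehat w(0)$, one gets $W^*(X)=2S(X;0)=\tfrac{2}{3\zeta(2)}\widehat w(0)\,X+O_{\eps,w}(X^{1/2+\eps})$. (Alternatively one may simply quote the asymptotic for $W^*(X)$ from \cite{FPS2}.)

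Finally I would form $\frac{1}{W^*(X)}\sumstar|d|^{-r}=\frac{S(X;r)}{S(X;0)}$, factor out the main terms, and expand $\tfrac{1+O(\cdots)}{1+O(X^{-1/2+\eps})}$. The main term becomes $\tfrac{\mathcal M w(1-r)}{\mathcal M w(1)}X^{-r}=\tfrac{2}{\widehat w(0)}X^{-r}\mathcal M w(1-r)$, and the error is dominated by $X^{-1}(1+|\Im r|)^{13/84+\eps}X^{1/2-\Re r+\eps}=(1+|\Im r|)^{13/84+\eps}X^{-1/2-\Re r+\eps}$; the contribution of the error in the denominator is $\ll|\mathcal M w(1-r)|X^{-\Re r-1/2+\eps}$, which is smaller since $\mathcal M w(1-r)$ decays rapidly in $\Im r$. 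This is the claimed estimate. I expect the only real point of care to be the uniformity in $\Im(r)$: the interplay between the growth of $\zeta$ just to the right of the critical line and the vertical decay of $\mathcal M w$ is exactly what forces the subconvex input and produces the exponent $\tfrac{13}{84}$ in place of the convexity exponent $\tfrac14$.
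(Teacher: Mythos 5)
Your proof is correct and follows essentially the same route as the paper: Mellin inversion with the Dirichlet series of odd squarefree integers, a contour shift to $\Re(s)=\tfrac12-\Re(r)+\eps$ picking up the simple pole at $s=1-r$, Bourgain's subconvexity bound together with the rapid decay of $\mathcal{M}w$ on the shifted line, and division by the asymptotic for $W^*(X)$. The only cosmetic differences are that you derive the $W^*(X)$ asymptotic from the same Mellin computation at $r=0$ (the paper cites \cite{FPS}) and that you spell out the error propagation in the ratio, which the paper leaves implicit.
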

	
\begin{proof}
We write the sum we are interested in as the Mellin integral
$$ S:=\sumn |d|^{-r}= \frac 1{2\pi i} \int_{(2)}\frac{2^{s+r+1}}{2^{s+r}+1} \frac{\zeta(s+r)}{\zeta(2(s+r))} X^s \mathcal M w(s) ds. $$
We pull the contour of integration to the line $\Re(s)=\tfrac 12 -\Re(r)+\eps.$ Then we have a contribution from the simple pole at $s=1-r$. Note that the restriction on $\Re(r)$ ensures that we do not encounter the potential pole of $\mathcal{M}w(s)$ at $s=0$. Hence, by the rapid decay of $\mathcal M w$ on vertical lines, Bourgain's subconvexity bound \cite[Theorem 5]{B} on $|\zeta(s+r)|$ and the boundedness of $|\zeta(2(s+r))^{-1}|$, we have that
$$ S= \frac{4}{3\zeta(2)}X^{1-r}\mathcal M w(1-r)  +O_{\eps,w}\left(\big(|\Im(r)|+1\big)^{\frac {13}{84}+\eps}X^{\frac12-\Re(r)+\eps}\right). $$
Similarly as in \cite[Lemma 2.10]{FPS}, it can be shown that $$ W^*(X)=\frac{2}{3\zeta(2)}X\widehat w(0) +O_{\eps,w}\big(X^{\frac12 +\eps}\big).$$ We complete the proof by combining the last two estimates.
\end{proof}

We now give an asymptotic formula for the last term in \eqref{Dstar ratios}. By a straightforward computation and by recalling the definitions of $X_d$ and $A$, we apply the substitution $r=2\pi i \tau / L$ to obtain
\begin{multline}\label{Big I}
I:=\frac{-\zeta(2)}{L}\int_{\mathcal C'}
\Bigg(\frac{\Gamma\left(\tfrac14 -\tfrac{\pi i \tau}L\right)}{\Gamma\left(\tfrac14 +\tfrac{\pi i \tau}L\right)}+\frac{\Gamma\left(\tfrac34 -\tfrac{\pi i \tau}L\right)}{\Gamma\left(\tfrac34 +\tfrac{\pi i \tau}L\right)} \Bigg)\left(\frac{\pi}{8}\right)^{\frac{2 \pi i \tau}{L}}\left(1+\frac{2-2^{\frac{4\pi i \tau}L+1}}{4-2^{\frac{4\pi i \tau}L}}\right)\frac{\zeta\left(1-\tfrac{4\pi i \tau}L\right)}{\zeta\left(2-\tfrac{4\pi i \tau}L\right)} \phi\left(\tau\right)\\
\times\frac{1}{W^*(X)}\sumn |d|^{-\frac{2\pi i \tau}L} d\tau,
\end{multline}
where $\mathcal C'$ again denotes the horizontal line $\Im(\tau)=-\frac{Lc'}{2\pi}$.
		
\begin{lemma}\label{lemma I evaluation} 
We have the asymptotic formula 
$$ I = \frac{\phi(0)}2 -\frac 12 \int_{-1}^1 \widehat \phi(\tau)d\tau+\frac {\widehat \phi(1)}L \bigg(  -\gamma -1 - \log\big(2^{\frac 73}\big) + 2\frac{\zeta'(2)}{\zeta(2)} -\frac{\mathcal M w'(1)}{\mathcal Mw(1)} \bigg) +O(L^{-2}). $$
\end{lemma}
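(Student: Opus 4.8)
The plan is to move the contour $\mathcal{C}'$ in \eqref{Big I} to the real line and then insert the asymptotic expansion of the character average supplied by Lemma \ref{lemma: d pretentious :p}. As a function of $\tau$, the integrand of \eqref{Big I} is meromorphic in the strip between $\mathcal{C}'$ (at $\Im(\tau)=-Lc'/2\pi$, with $0<c'<\tfrac14$) and $\R$: the factor $\frac{1}{W^*(X)}\sumn|d|^{-2\pi i\tau/L}$ is entire, the potential poles of the remaining factors (the Gamma ratios, $\mathcal{M}w(1-\tfrac{2\pi i\tau}{L})$, $(4-2^{4\pi i\tau/L})^{-1}$ and $\zeta(2-\tfrac{4\pi i\tau}{L})^{-1}$) all lie at $\Im(\tau)\le-L/4\pi$, well below $\mathcal{C}'$ since $c'<\tfrac14$, and the only pole in the strip is the simple pole at $\tau=0$ coming from $\zeta(1-\tfrac{4\pi i\tau}{L})$. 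Shifting $\mathcal{C}'$ down to the real axis with a small semicircular indentation below $\tau=0$ (legitimate by the Paley--Wiener decay of $\phi$ along horizontal lines) and letting the radius tend to $0$ gives
\[
I=\frac{\phi(0)}{2}+\mathrm{P.V.}\!\int_{\R}\big(\text{integrand of }\eqref{Big I}\big)\,d\tau ,
\]
where $\tfrac{\phi(0)}{2}$ is $i\pi$ times the residue at $\tau=0$: since $\frac{1}{W^*(X)}\sumn|d|^{0}=1$, the Gamma ratios, $(\pi/8)^0$, $1+\tfrac{2-2^{4\pi i\tau/L+1}}{4-2^{4\pi i\tau/L}}$ and $\zeta(2-\tfrac{4\pi i\tau}{L})^{-1}$ all take their values at $0$, while $\zeta(1-\tfrac{4\pi i\tau}{L})\sim\tfrac{iL}{4\pi\tau}$, so the residue equals $-\tfrac{i\phi(0)}{2\pi}$.

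Next I would apply Lemma \ref{lemma: d pretentious :p} with $r=\tfrac{2\pi i\tau}{L}$ (hence $\Re(r)=0$ on $\R$) to replace $\frac{1}{W^*(X)}\sumn|d|^{-2\pi i\tau/L}$ by $\tfrac{2}{\widehat w(0)}X^{-2\pi i\tau/L}\mathcal{M}w(1-\tfrac{2\pi i\tau}{L})$; the error term $O\big((1+|\tau|)^{13/84+\eps}X^{-1/2+\eps}\big)$ of that lemma contributes $O(X^{-1/2+\eps})=O(L^{-2})$ to the principal-value integral (isolating the polar part $\tfrac{iL}{4\pi\tau}$ of the zeta-factor to absorb the singularity at $\tau=0$, and using the Schwartz decay of $\phi$ elsewhere). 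Writing $\log X=L+\log(2\pi e)$, so that $X^{-2\pi i\tau/L}=e^{-2\pi i\tau}e^{-2\pi i\tau\log(2\pi e)/L}$, the surviving principal-value integral becomes
\[
\frac{-2\zeta(2)}{L\,\widehat w(0)}\;\mathrm{P.V.}\!\int_{\R}K(\tau)\,\zeta\!\Big(1-\tfrac{4\pi i\tau}{L}\Big)e^{-2\pi i\tau}\phi(\tau)\,d\tau ,
\]
where $K(\tau)$ gathers the Gamma ratios, $(\pi/8)^{2\pi i\tau/L}$, $1+\tfrac{2-2^{4\pi i\tau/L+1}}{4-2^{4\pi i\tau/L}}$, $\zeta(2-\tfrac{4\pi i\tau}{L})^{-1}$, $\mathcal{M}w(1-\tfrac{2\pi i\tau}{L})$ and $e^{-2\pi i\tau\log(2\pi e)/L}$. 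The crucial point is that $K$ depends on $\tau$ only through $\beta:=\tau/L$, i.e.\ $K(\tau)=\mathcal{K}(\tau/L)$ for a fixed holomorphic $\mathcal{K}$ with $\mathcal{K}(0)=\tfrac{\widehat w(0)}{\zeta(2)}$, and $\mathcal{K}(\beta)$ decays rapidly as $|\beta|\to\infty$ along $\R$ because of the factor $\mathcal{M}w(1-2\pi i\beta)$.

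Now I split $\zeta\big(1-\tfrac{4\pi i\tau}{L}\big)=\tfrac{iL}{4\pi\tau}+\mathcal{E}(\tau/L)$, where $\mathcal{E}(\beta)=\zeta(1-4\pi i\beta)-\tfrac{i}{4\pi\beta}$ is holomorphic at $0$ with $\mathcal{E}(0)=\gamma$ and grows at most logarithmically. In the $\mathcal{E}$-part, replacing the slowly varying $\mathcal{K}(\tau/L)\mathcal{E}(\tau/L)$ by its value $\mathcal{K}(0)\gamma$ at $\tau=0$ costs $O(1/L)$ against the Schwartz weight, and $\int_{\R}e^{-2\pi i\tau}\phi(\tau)\,d\tau=\widehat\phi(1)$, so the $\mathcal{E}$-part equals $\tfrac{-2\zeta(2)}{L\widehat w(0)}\big(\mathcal{K}(0)\gamma\,\widehat\phi(1)+O(L^{-1})\big)=-\tfrac{2\gamma\widehat\phi(1)}{L}+O(L^{-2})$. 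In the $\tfrac{iL}{4\pi\tau}$-part, Taylor-expanding $\mathcal{K}(\tau/L)=\mathcal{K}(0)+\mathcal{K}'(0)\tfrac{\tau}{L}+O(\tau^2L^{-2})$ gives
\[
\frac{-i\zeta(2)}{2\pi\widehat w(0)}\Big(\mathcal{K}(0)\,\mathrm{P.V.}\!\int_{\R}\frac{e^{-2\pi i\tau}\phi(\tau)}{\tau}\,d\tau+\frac{\mathcal{K}'(0)}{L}\,\widehat\phi(1)\Big)+O(L^{-2}),
\]
and writing $\phi(\tau)=\int\widehat\phi(u)e^{2\pi iu\tau}\,du$ and using $\mathrm{P.V.}\int_{\R}\tfrac{e^{2\pi iv\tau}}{\tau}\,d\tau=i\pi\,\mathrm{sgn}(v)$ together with the evenness of $\widehat\phi$ gives $\mathrm{P.V.}\int_{\R}\tfrac{e^{-2\pi i\tau}\phi(\tau)}{\tau}\,d\tau=-i\pi\int_{-1}^{1}\widehat\phi(u)\,du$. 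Since $\mathcal{K}(0)=\tfrac{\widehat w(0)}{\zeta(2)}$, the leading term becomes $-\tfrac12\int_{-1}^{1}\widehat\phi(u)\,du$ and the next term $\tfrac{\widehat\phi(1)}{L}\cdot\tfrac{-i\zeta(2)}{2\pi\widehat w(0)}\mathcal{K}'(0)$, so altogether $I=\tfrac{\phi(0)}{2}-\tfrac12\int_{-1}^{1}\widehat\phi(u)\,du+\tfrac{\widehat\phi(1)}{L}\big(\tfrac{-i\zeta(2)}{2\pi\widehat w(0)}\mathcal{K}'(0)-2\gamma\big)+O(L^{-2})$. It remains to compute $\mathcal{K}'(0)$ by logarithmic differentiation of the product defining $\mathcal{K}$ at $\beta=0$: the Gamma ratios contribute $-\pi i\big(\tfrac{\Gamma'}{\Gamma}(\tfrac14)+\tfrac{\Gamma'}{\Gamma}(\tfrac34)\big)=2\pi i(\gamma+3\log 2)$ via $\tfrac{\Gamma'}{\Gamma}(\tfrac14)+\tfrac{\Gamma'}{\Gamma}(\tfrac34)=-2\gamma-6\log 2$; $(\pi/8)^{2\pi i\beta}$ contributes $2\pi i\log(\pi/8)$; $1+\tfrac{2-2^{4\pi i\beta+1}}{4-2^{4\pi i\beta}}$ contributes $-\tfrac{8\pi i\log 2}{3}$; $\zeta(2-4\pi i\beta)^{-1}$ contributes $4\pi i\tfrac{\zeta'(2)}{\zeta(2)}$; $\mathcal{M}w(1-2\pi i\beta)$ contributes $-2\pi i\tfrac{\mathcal{M}w'(1)}{\mathcal{M}w(1)}$; and $e^{-2\pi i\beta\log(2\pi e)}$ contributes $-2\pi i\log(2\pi e)$. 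Summing these and using the identity $3\log 2+\log(\pi/8)-\tfrac43\log 2-\log(2\pi e)=-1-\log(2^{7/3})$ gives $\tfrac{\mathcal{K}'(0)}{\mathcal{K}(0)}=2\pi i\big(\gamma-1-\log(2^{7/3})+2\tfrac{\zeta'(2)}{\zeta(2)}-\tfrac{\mathcal{M}w'(1)}{\mathcal{M}w(1)}\big)$, hence $\tfrac{-i\zeta(2)}{2\pi\widehat w(0)}\mathcal{K}'(0)=\gamma-1-\log(2^{7/3})+2\tfrac{\zeta'(2)}{\zeta(2)}-\tfrac{\mathcal{M}w'(1)}{\mathcal{M}w(1)}$; subtracting the $2\gamma$ coming from the $\mathcal{E}$-part recovers exactly the coefficient in the statement. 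The main obstacle I anticipate is the bookkeeping in this last step — keeping careful track of the Gamma, power-of-$2$, $\pi$ and $2\pi e$ constants so that they collapse to $-1-\log(2^{7/3})$ — together with pinning down the factor $\tfrac12$ (rather than $1$) in the contour-indentation step and confirming that the errors from Lemma \ref{lemma: d pretentious :p} and from the various Taylor truncations are genuinely $O(L^{-2})$.
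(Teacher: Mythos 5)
Your proposal is correct and follows essentially the same route as the paper: shift the contour of \eqref{Big I} to the real axis picking up half the residue $\tfrac{\phi(0)}{2}$ at $\tau=0$, insert Lemma \ref{lemma: d pretentious :p}, expand the slowly varying factors to first order in $\tau/L$, evaluate the remaining singular integral by a Fourier/sign-function identity to get $-\tfrac12\int_{-1}^1\widehat\phi$, and extract the $\widehat\phi(1)/L$ coefficient (your $\mathcal K'(0)/\mathcal K(0)$ bookkeeping and the extra $\gamma$ from the regular part of $\zeta$ match the paper's constants exactly). The differences are only organizational — the paper keeps a fixed small semicircle $C_2$ and splits the singular integral into odd (residue) and even (Plancherel) parts instead of using a principal value plus half-residue, which also sidesteps the small technical point you flagged about feeding the error term of Lemma \ref{lemma: d pretentious :p} through the principal value at $\tau=0$.
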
	
	
\begin{proof}
Let $\eta>0$ be small. We first change the contour of integration in $I$ to the path
$$ C:=C_0\cup C_1\cup C_2, $$ 
where
$$C_0:= \{ \Im(\tau)= 0 , |\Re(\tau)| \geq L^\eps \},
\quad C_1:= \{ \Im(\tau) = 0 ,  \eta \leq |\Re(\tau)| \leq L^\eps \}, \quad C_2:= \{ |\tau| = \eta, \Im(\tau) \leq 0 \}.  $$	
For the part of the integral $I$ over $C_0$, we trivially bound the sum over $d$ and use the rapid decay of $\phi$ to obtain the bound
\begin{align*}
&\ll_{\eps} \frac 1L \int_{C_0} \max\big(\log(|\tau|+2),L|\tau|^{-1}\big) (|\tau|+2)^{-3/\eps^2}d\tau \\
&\ll  \int_{L^{\eps}}^{L} \log(|\tau|+2) |\tau|^{-1} (|\tau|+2)^{-3/\eps^2}d\tau +L^{-2/\eps^2} \\
&\ll L^{-3/\eps} (\log L)^2 + L^{-2/\eps^2} \ll L^{-2/\eps}. 
\end{align*}  
On $C_1\cup C_2$, we use Taylor expansions for each factor in the integrand of $I$, except the last in which we apply Lemma \ref{lemma: d pretentious :p}. This yields
\begin{align*}
&-\frac{\zeta(2)}L\Bigg(\frac{\Gamma\left(\tfrac14 -\tfrac{\pi i \tau}L\right)}{\Gamma\left(\tfrac14 +\tfrac{\pi i \tau}L\right)}+\frac{\Gamma\left(\tfrac34 -\tfrac{\pi i \tau}L\right)}{\Gamma\left(\tfrac34 +\tfrac{\pi i \tau}L\right)} \Bigg)\left(\frac{\pi}{8}\right)^{\frac{2 \pi i \tau}{L}}\left(1+\frac{2-2^{\frac{4\pi i \tau}L+1}}{4-2^{\frac{4\pi i \tau}L}}\right)\frac{\zeta\left(1-\tfrac{4\pi i \tau}L\right)}{\zeta\left(2-\tfrac{4\pi i \tau}L\right)}\frac{ \phi\left(\tau\right)}{W^*(X)}\sumn |d|^{-\frac{2\pi i \tau}L} \\
&=-\frac 1L \bigg( 2+ (\gamma+\log 8)\frac{4\pi i \tau}L +O\bigg( \frac{|\tau|^2}{L^2}\bigg)\bigg)\bigg( 1+\frac{2\pi i \tau }{L} \log\Big(\frac{\pi}8\Big)  +O\bigg( \frac{|\tau|^2}{L^2}\bigg)\bigg) \\
&\quad \times\bigg( 1+ \bigg( 2\frac{\zeta'(2)}{\zeta(2)} - \frac 43 \log 2 \bigg) \frac{2\pi i \tau}L  +O\bigg( \frac{|\tau|^2}{L^2}\bigg)\bigg) \bigg( -\frac L{4\pi i \tau}+\gamma  +O\bigg( \frac{|\tau|}{L}\bigg)\bigg) \phi(\tau) \\ 
& \quad \times\frac{2}{\widehat{w}(0)} \bigg(X^{-\frac{2\pi i \tau}L}\mathcal M w\Big(1-\frac{2\pi i \tau}L\Big)  +O_{\eps,w}\bigg(\bigg(\frac{|\Im(i\tau)|}L+1\bigg)^{\frac{13}{84}+\eps}X^{-\frac12+\eps}\bigg)\bigg) \\
&= \frac 1{2\pi i\tau} \bigg( 1+ \frac{2\pi i \tau }L\bigg( -\gamma-\log\bigg(\frac{2^{\frac43}}{\pi}\bigg)+2\frac{\zeta'(2)}{\zeta(2)}\bigg) +O\bigg( \frac{|\tau|^2}{L^2}\bigg)\bigg)\phi(\tau) e^{-2\pi i \tau} \\
& \quad \times\bigg(1-\frac{2\pi i \tau \log (2\pi e)}{L}  +O\bigg( \frac{|\tau|^2}{L^2}\bigg) \bigg)\bigg(1-\frac{\mathcal M w'(1)}{\mathcal Mw(1)} \frac{2\pi i \tau}L +O\bigg( \frac{|\tau|^2}{L^2}\bigg) \bigg) +O_{\eps,w}\big(X^{-\frac12+\eps}\big) \\ 
&=  \frac 1{2\pi i\tau} \bigg( 1+ \frac{2\pi i \tau }L\bigg( -\gamma -1- \log\big(2^{\frac 73}\big) + 2\frac{\zeta'(2)}{\zeta(2)} -\frac{\mathcal M w'(1)}{\mathcal Mw(1)} \bigg) +O\bigg( \frac{|\tau|^2}{L^2}\bigg)\bigg)\phi(\tau) e^{-2\pi i \tau}+O_{\eps,w}\big(X^{-\frac12+\eps}\big).
\end{align*}
Hence 
$$ I = I_1+I_2+ O_{w}(L^{-2}),$$
where 
$$ I_1 := \int_{C_1\cup C_2} \frac 1{2\pi i\tau}\phi(\tau) e^{-2\pi i \tau} d\tau$$ and	
$$ I_2 := \frac 1L \bigg( -\gamma -1- \log\big(2^{\frac 73}\big) + 2\frac{\zeta'(2)}{\zeta(2)} -\frac{\mathcal M w'(1)}{\mathcal Mw(1)} \bigg) \int_{C_1\cup C_2}\phi(\tau) e^{-2\pi i \tau} d\tau.$$ 

For the second integral $I_2$, by the rapid decay of $\phi$ on the real line and the holomorphy of the integrand, we have that
\begin{align*}
I_2& = \frac 1L \bigg( -\gamma -1- \log\big(2^{\frac 73}\big) + 2\frac{\zeta'(2)}{\zeta(2)} -\frac{\mathcal M w'(1)}{\mathcal Mw(1)} \bigg) \int_{\mathbb R}\phi(\tau) e^{-2\pi i \tau} d\tau+O_{\eps}(L^{-2/\eps}) \\
&=  \frac {\widehat \phi(1)}L \bigg( -\gamma-1 - \log\big(2^{\frac 73}\big) + 2\frac{\zeta'(2)}{\zeta(2)} -\frac{\mathcal M w'(1)}{\mathcal Mw(1)} \bigg) +O_{\eps}(L^{-2/\eps}).
\end{align*}	
Similarly, for the first integral $I_1$, we find that
$$ I_1 = \frac 1{2\pi i}\int_{C_0\cup C_1\cup C_2}e^{-2\pi i \tau}  \frac {\phi(\tau)}{\tau} d\tau +O_{\eps}(L^{-2/\eps})=J_1+J_2 +O_{\eps}(L^{-2/\eps}),$$
where 
$$ J_1:=\frac 1{2\pi i}\int_{C_0\cup C_1\cup C_2} \cos(2\pi \tau) \frac {\phi(\tau)}{\tau} d\tau $$
and
$$ J_2:=-\frac 1{2\pi }\int_{C_0\cup C_1\cup C_2} \sin(2\pi \tau) \frac {\phi(\tau)}{\tau} d\tau. $$
We see that since the integrand in $J_1$ is odd, the part of the integral on $C_0\cup C_1$ is zero. Hence
$$  J_1 =\frac 1{2\pi i}\int_{C_2} \cos(2\pi \tau) \frac {\phi(\tau)}{\tau} d\tau, $$
which by the residue theorem tends to $\phi(0)/2$ as $\eta$ tends to zero. As for the integral $J_2$, we apply Plancherel's identity. Since $\sin(2\pi \tau)/\tau$ is an entire function,
\begin{align*}
J_2=-\int_{\mathbb R} \frac{\sin(2\pi \tau)}{2\pi \tau}\phi(\tau)d\tau= -\frac 12 \int_{\mathbb R} I_{[-1,1]}(\tau)\widehat \phi(\tau)d\tau = -\frac 12 \int_{-1}^1 \widehat \phi(\tau)d\tau,
\end{align*}
which coincides with the second term in the Katz-Sarnak prediction. Since all of our error terms are independent of $\eta$, we conclude the desired result.
\end{proof}

\begin{proof}[Proof of Theorem \ref{Theorem ratios transition}]
The proof follows by combining Lemmas \ref{lemma: prime sum},  \ref{lemma logd}, \ref{lemma gammas} and \ref{lemma I evaluation}.
\end{proof}

\section*{Acknowledgments}
We thank Bruno Martin for helpful discussions. The first author was supported by an NSERC discovery grant. The third author was supported by a grant from the Swedish Research Council (grant 2016-03759).


\begin{thebibliography}{}
	
\bibitem[B]{B}  J.\ Bourgain, Decoupling, exponential sums and the Riemann zeta function, J.\ Amer.\ Math.\ Soc.\ \textbf{30} (2017), no.\ 1, 205--224. 
		
\bibitem[CFZ]{CFZ} J.\ B.\ Conrey, D.\ W.\ Farmer, M.\ R.\ Zirnbauer, Autocorrelation of ratios of $L$-functions, Commun.\ Number Theory Phys.\ \textbf{2} (2008), no.\ 3, 593--636.
		
\bibitem[CS]{CS} J.\ B.\ Conrey, N.\ C.\ Snaith, Applications of the $L$-functions ratios conjectures, Proc.\ Lond.\ Math.\ Soc.\ (3) \textbf{94} (2007), no.\ 3, 594--646.
		
\bibitem[D]{D} H.\ Davenport, \emph{Multiplicative number theory}, third edition, revised and with a preface by H. L. Montgomery, Graduate Texts in Mathematics \textbf{74}, Springer-Verlag, New York, 2000.
		
\bibitem[ERR]{ERR} A.\ Entin, E.\ Roditty--Gershon, Z.\ Rudnick, Low-lying zeros of quadratic Dirichlet $L$-functions, hyper-elliptic curves and random matrix theory, Geom.\ Funct.\ Anal.\ \textbf{23} (2013), no.\ 4, 1230--1261.		
		
\bibitem[FPS1]{FPS} D.\ Fiorilli, J.\ Parks, A.\ S\"odergren, Low-lying zeros of elliptic curve $L$-functions: Beyond the Ratios Conjecture, Math.\ Proc.\ Cambridge Philos.\ Soc.\ \textbf{160} (2016), no.\ 2, 315--351.
		
\bibitem[FPS2]{FPS2} D.\ Fiorilli, J.\ Parks, A.\ S\"odergren, Low-lying zeros of quadratic Dirichlet $L$-functions: Lower order terms for extended support, Compositio Math.\ \textbf{153} (2017), no.\ 6, 1196--1216.
		
\bibitem[F]{F} F.\ G.\ Friedlander, \emph{Introduction to the theory of distributions}, second edition, with additional material by M.\ Joshi, Cambridge University Press, Cambridge, 1998.
		
\bibitem[G]{Gt} P.\ Gao, $n$-level density of the low-lying zeros of quadratic Dirichlet $L$-functions, Ph.D.\ Thesis, University of Michigan, 2005.		
		
\bibitem[IK]{IK} H.\ Iwaniec, E.\ Kowalski, \emph{Analytic number theory}, American Mathematical Society Colloquium Publications \textbf{53}, American Mathematical Society, Providence, RI, 2004.
		
\bibitem[KS]{KS2} N.\ M.\ Katz, P.\ Sarnak, Zeroes of zeta functions and symmetry, Bull.\ Amer.\ Math.\ Soc.\ (N.S.) \textbf{36} (1999), no.\ 1, 1--26.
		
\bibitem[Mi]{Mi} S.\ J.\ Miller, A symplectic test of the $L$-functions ratios conjecture, Int.\ Math.\ Res.\ Not.\ IMRN 2008, no.\ 3, Art.\ ID rnm146, 36 pp.
		
\bibitem[MV]{MV} H.\ L.\ Montgomery, R.\ C.\ Vaughan, \emph{Multiplicative number theory. I. Classical theory}, Cambridge Studies in Advanced Mathematics \textbf{97}, Cambridge University Press, Cambridge, 2007. 

\bibitem[OS]{OS2} A.\ E.\ \"Ozl\"uk, C.\ Snyder, On the distribution of the nontrivial zeros of quadratic $L$-functions close to the real axis, Acta Arith.\ \textbf{ 91} (1999), no.\ 3, 209--228.
		
\bibitem[Rub]{Rub} M.\ Rubinstein, Low-lying zeros of $L$-functions and random matrix theory, Duke Math.\ J.\ \textbf{109} (2001), no.\ 1, 147--181.
				
\bibitem[Rud]{Rud} Z.\ Rudnick, Traces of high powers of the Frobenius class in the hyperelliptic ensemble, Acta Arith.\ \textbf{143} (2010), no.\ 1, 81--99. 
		
\bibitem[SST]{SST} P.\ Sarnak, S.\ W.\ Shin, N.\ Templier, Families of $L$-functions and their symmetry, Proceedings of Simons Symposia, \emph{Families of Automorphic Forms and the Trace Formula}, Springer-Verlag (2016), 531--578.

\bibitem[V]{V} V.\ S.\ Vladimirov, \emph{Equations of mathematical physics}, translated from the Russian by Audrey Littlewood, Pure and Applied Mathematics \textbf{3}, Marcel Dekker, Inc., New York, 1971. 
		
\end{thebibliography}
\end{document}